\documentclass[11pt,reqno]{amsart}

\usepackage[utf8]{inputenc}
\usepackage[T1]{fontenc}
\usepackage{lmodern}
\usepackage{amsmath,amssymb,amsthm,amscd,mathtools}
\usepackage[mathscr]{euscript}
\usepackage{mathrsfs}
\usepackage{enumitem}
\usepackage{xcolor}
\usepackage[colorlinks=true,linkcolor=blue,citecolor=black,urlcolor=black]{hyperref}
\hypersetup{pdftitle={A Separable Banach Space with a Schauder Basis Which Is Not a Lipschitz Retract of Its Bidual},pdfauthor={Antonio Acuaviva}}
\usepackage{aliascnt}
\usepackage[nameinlink,noabbrev]{cleveref}

\theoremstyle{plain}
\newtheorem{theorem}{Theorem}[section]

\newaliascnt{lemma}{theorem}
\newtheorem{lemma}[lemma]{Lemma}
\aliascntresetthe{lemma}
\crefname{lemma}{lemma}{lemmas}
\Crefname{lemma}{Lemma}{Lemmas}

\newaliascnt{proposition}{theorem}
\newtheorem{proposition}[proposition]{Proposition}
\aliascntresetthe{proposition}
\crefname{proposition}{proposition}{propositions}
\Crefname{proposition}{Proposition}{Propositions}

\newaliascnt{corollary}{theorem}
\newtheorem{corollary}[corollary]{Corollary}
\aliascntresetthe{corollary}
\crefname{corollary}{corollary}{corollaries}
\Crefname{corollary}{Corollary}{Corollaries}

\theoremstyle{definition}
\newaliascnt{definition}{theorem}
\newtheorem{definition}[definition]{Definition}
\aliascntresetthe{definition}
\crefname{definition}{definition}{definitions}
\Crefname{definition}{Definition}{Definitions}

\newaliascnt{remark}{theorem}
\newtheorem{remark}[remark]{Remark}
\aliascntresetthe{remark}
\crefname{remark}{remark}{remarks}
\Crefname{remark}{Remark}{Remarks}

\newaliascnt{question}{theorem}

\aliascntresetthe{question}
\crefname{question}{question}{questions}
\Crefname{question}{Question}{Questions}

\newcommand{\N}{\mathbb N}
\newcommand{\R}{\mathbb R}
\newcommand{\C}{\mathbb C}
\newcommand{\K}{\mathbb K}

\newcommand{\eps}{\varepsilon}
\newcommand{\norm}[1]{ \| #1 \| }

\newcommand{\restr}[2]{\left.#1\right|_{#2}}

\DeclareMathOperator{\Lip}{Lip}

\DeclareMathOperator{\spn}{span}

\subjclass[2020]{Primary 46B80; Secondary 46B20, 46B03, 46B15}

\keywords{Lindenstrauss retraction problem, uniformly continuous retractions, Lipschitz retractions, biduals of Banach spaces, monotone Schauder bases, finite-dimensional decompositions, Lipschitz approximability}

\begin{document}

\title[Lipschitz retractions from biduals]{A Separable Banach Space with a Schauder Basis Which Is Not a Lipschitz Retract of Its Bidual}

\author[A. Acuaviva]{Antonio Acuaviva}
\address{School of Mathematical Sciences,
Fylde College,
Lancaster University,
LA1 4YF,
United Kingdom}
\email{ahacua@gmail.com}

\date{\today}

\begin{abstract}
    We construct a separable Banach space $X$ with a Schauder basis such that $B_X$ is not a uniformly continuous retract of $B_{X^{**}}$. Consequently, $X$ is not a uniformly continuous retract of $X^{**}$ and hence is not a Lipschitz retract of $X^{**}$.
\end{abstract}

\maketitle

\tableofcontents

\bigskip
\section{Introduction and organisation}\label{sec:introduction}

A longstanding problem, originating in Lindenstrauss's seminal 1964 paper on nonlinear projections, asks whether every Banach space $X$, canonically embedded in $X^{**}$, is a Lipschitz retract of $X^{**}$ \cite{Lindenstrauss1964}. Kalton gave a counterexample in the general case by constructing a nonseparable Lindenstrauss space $Z$ such that $B_Z$ is not a uniformly continuous retract of $B_{Z^{**}}$; in particular, $Z$ is not a Lipschitz retract of $Z^{**}$ \cite[Theorem~4.5]{Kalton2011}. Its restriction to separable Banach spaces, however, remained unresolved. Both the Lipschitz formulation and the weaker question of whether $B_X$ is a uniformly continuous retract of $B_{X^{**}}$ have subsequently been raised on several occasions; see, for example, \cite[Problems~4 and~5]{Kalton2012} and \cite[Problem~10]{GodefroyLancienZizler2014}.

This question may be viewed as a nonlinear counterpart of the classical problem of complementability in the bidual. It has an immediate affirmative answer when $X$ is reflexive or, more generally, when $X$ is linearly complemented in $X^{**}$. In his work, Lindenstrauss proved genuinely nonlinear positive examples, most notably $c_0$ and the spaces $C(K)$ for compact metrizable $K$, which are absolute Lipschitz retracts; see also \cite{Kalton2008}.

The problem is closely connected with two central themes in nonlinear Banach space theory. First, the Lipschitz free space construction linearises it: a Lipschitz retraction from $X^{**}$ onto $X$ yields a bounded projection from $\mathcal F(X^{**})$ onto $\mathcal F(X)$, and the converse follows by composing such a projection with the barycentre map \cite{GodefroyKalton2003,Kalton2008}. Secondly, the principle of local reflexivity places $X^{**}$ locally inside $X$ with arbitrarily small loss. Thus the problem asks whether this finite-dimensional linear information can always be assembled into a single global Lipschitz retraction. This relation between Lipschitz retractions, Lipschitz free spaces, and local complementation has been developed further in \cite{HajekQuilis2022}.

Our main result settles the separable case negatively, even within the class of spaces with a bimonotone Schauder basis.

\begin{theorem}\label{thm:main}
    There exists a separable Banach space $X$ with a bimonotone Schauder basis such that $B_X$ is not a uniformly continuous retract of $B_{X^{**}}$. In particular, $X$ is not a uniformly continuous retract of $X^{**}$ and hence is not a Lipschitz retract of $X^{**}$.
\end{theorem}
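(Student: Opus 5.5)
We will obtain $X$ as an $\ell_1$- or $c_0$-type sum of a suitably chosen sequence of finite-dimensional spaces, with a bimonotone basis built in. The obstruction will come from a quantitative failure, uniform in the dimension, of uniformly continuous retractions in finite-dimensional pieces. The guiding model is Kalton's nonseparable argument \cite[Theorem~4.5]{Kalton2011}. There, a retraction $B_{Z^{**}}\to B_Z$ produces, by an averaging argument, a linear or near-linear object that cannot exist. Our plan is to localise this. For each $n$ we will find a pair $E_n\subset F_n$ of finite-dimensional spaces with bimonotone bases. We will arrange that the "bidual" situation $X\subset X^{**}$ contains, with uniform constants, copies of $F_n$ in $X^{**}$ whose intersection with $X$ behaves like $E_n$. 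Any uniformly continuous retraction $r\colon B_{X^{**}}\to B_X$ with modulus $\omega$ will then induce maps $B_{F_n}\to B_{E_n}$ close to the identity on $B_{E_n}$, with modulus controlled by $\omega$.

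The key steps, in order, are as follows.

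First, we will reduce uniform continuity to a Lipschitz-for-large-distances statement. By the standard argument, a uniformly continuous map on a convex set is coarse Lipschitz: $\|r(x)-r(y)\|\le C\|x-y\|+\delta$ for any prescribed $\delta>0$. Rescaling within $B_{X^{**}}$ then lets us work at scale $1/N$ with effective Lipschitz behaviour.

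Second, we will linearise. We average $r$ over a finite abelian group of isometries, namely sign changes or permutations of the basis blocks, that act compatibly on $X$ and $X^{**}$. Combined with a Bourgain/Kalton-type averaging (integrating against a compactly supported density in the finite-dimensional piece), this produces a linear operator $P_n\colon F_n\to E_n$. This operator will have norm at most $C'$ and satisfy $\|P_n|_{E_n}-\mathrm{id}\|\le\eta$, where $C'$ and $\eta$ depend only on $\omega$. In effect, this gives uniform complementation of $E_n$ in $F_n$.

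Third, we choose the pieces so that this is impossible. We will take $F_n$ to be finite-dimensional spaces in which a natural subspace $E_n$ has projection constant tending to infinity; candidates are the James-type or Pisier/Bourgain--Delbaen-type constructions. We glue them so that weak$^*$-limits of block sequences in $X$ realise $F_n$ inside $X^{**}$, while only $E_n$ is hit in $X$. A concrete plan is a James-type summing norm on each block, where the functional "limit along the block" lives in $X^{**}\setminus X$. A diagonal gluing with bimonotone basis is then routine via Schauder-basis sums.

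The main obstacle is the second step: extracting a genuinely linear, uniformly bounded projection from a merely uniformly continuous nonlinear retraction. Two features make this hard. The finite-dimensional pieces must be large, so differentiation arguments lose dimension-dependent constants. Moreover, the symmetry group must act on $X^{**}$ preserving $X$. We would first try Kalton's approach of using a weak$^*$-compatible, invariant-mean averaging over an amenable (abelian) group of basis-respecting isometries. This gives a linear map directly, without differentiation, and is the reason for insisting on a bimonotone, highly symmetric basis in the construction of $X$.
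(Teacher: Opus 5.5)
\textbf{There is a genuine gap, and it lies in the architecture of the plan, not in one step.}

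\emph{The candidate spaces cannot work.} A genuine $\ell_1$- or $c_0$-sum of finite-dimensional spaces is separable and has an unconditional finite-dimensional decomposition. By Kalton's theorem recalled in the introduction, it is therefore a Lipschitz retract of its bidual. An $\ell_1$-sum of finite-dimensional spaces is even a dual space, hence $1$-complemented in its bidual. Composing with the radial retraction then gives a retraction of $B_{X^{**}}$ onto $B_X$. Any variant would have to destroy unconditionality of the block decomposition, and your proposal does not say how. Moreover, with finite-dimensional blocks no element of $X^{**}\setminus X$ lives inside a single block, so ``limit along the block'' has no meaning there.

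\emph{Finite-dimensional linear data cannot carry the obstruction.} Your retraction $r$ lands in $B_X$, not in $B_{E_n}$. To induce maps $B_{F_n}\to B_{E_n}$ you need maps $X\to E_n$ with controlled behaviour.
\begin{itemize}
\item If these are linear projections $\pi_n$ with $\sup_n\norm{\pi_n}<\infty$, then $\pi_n^{**}$ restricted to $F_n$ is already a uniformly bounded projection of $F_n$ onto $E_n$, since $E_n^{**}=E_n$. This contradicts your third step without using $r$ at all.
\item If they are nonlinear maps with a common modulus, use local reflexivity. It gives a $(1+\eps)$-isomorphic copy $T_n(F_n)\subseteq X$ with $T_n$ equal to the identity on $E_n$. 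Your second step, applied to these maps on $T_n(F_n)$, would again contradict the third step without $r$.
\end{itemize}
So the configuration demanded by your second and third steps is self-defeating.

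\emph{The linearisation step is unsupported.} Invariant means over a group of isometries (sign changes, permutations) produce an equivariant retraction, not a linear one. Linearity in Lindenstrauss-type arguments comes from averaging over translations by the subspace. That requires $r$ to be defined on the whole space, with Lipschitz or coarse-Lipschitz control at large scales. On a unit ball, uniform continuity gives no Lipschitz control at small scales, so rescaling to scale $1/N$ goes the wrong way.

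\emph{Kalton's mechanism is misread.} His argument is not an averaging argument. It is a nonlinear section obstruction amplified by renormings and a $c_0$-sum.

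\emph{The missing idea is that mechanism, made separable.}
\begin{itemize}
\item The summation quotient $Q\colon(\bigoplus_m\ell_\infty^m)_{\ell_1}\to c_0$ has an isometric linear lifting $S\colon c_0\to Y^{**}$ with $Q^{**}S=J_{c_0}$. It has no uniformly continuous right inverse on $B_{c_0}$, by Kalton's stable-space obstruction via the Johnson--Zippin space.
\item For the renormings $\norm{y}_n=\max\{2^{-n}\norm{y}_Y,\norm{Qy}_\infty\}$, each $Sx$ lies within $3\cdot2^{-n}$, in $Y_n^{**}$, of a genuine lift $y_x\in B_{Y_n}$.
\item A single modulus for a retraction on $W=(\bigoplus_nY_n)_{c_0}$ therefore makes $x\mapsto r_n(Sx)$ a uniformly continuous approximate section with error below $1/2$, once $n$ is large.
\item \Cref{lem:correction} turns this into an exact section, contradicting \Cref{prop:no-section}.
\end{itemize}
No linearisation is needed. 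The bimonotone basis is not built in either. It is obtained afterwards from the monotone finite-dimensional decomposition of $W$, Pe{\l}czy\'nski's theorem, \Cref{lem:complemented-transfer}, and a renorming.
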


The basis assumption in \Cref{thm:main} is essentially optimal in the direction of unconditionality. Indeed, Kalton proved that every separable Banach space with an unconditional finite-dimensional decomposition is a Lipschitz retract of its bidual \cite[Theorem~5.2(i)]{Kalton2012}. Since every unconditional Schauder basis is a UFDD with one-dimensional components, the space in \Cref{thm:main} cannot be required to have an unconditional basis. Conversely, \Cref{prop:auxiliary-space} shows that the UFDD hypothesis in Kalton's theorem cannot be weakened to the existence of a monotone finite-dimensional decomposition. At the same time, the space in \Cref{thm:main} is Lipschitz-approximable with optimal common Lipschitz constant $1$; see \Cref{prop:soft-lipschitz-approximation}.

The nonretraction conclusion also extends to certain universal spaces. In particular, \Cref{cor:universal-basis} shows that the unit ball of Pe{\l}czy\'nski's complementably universal Banach space for the class of spaces with a Schauder basis is not a uniformly continuous retract of the unit ball of its bidual.

\subsection{Idea of the proof and organisation}
Kalton's nonseparable construction starts from the fact that the quotient map from $\ell_\infty$ onto $\ell_\infty/c_0$ has no uniformly continuous section on its unit ball and then amplifies this obstruction through equivalent renormings and a $c_0$-sum \cite{Kalton2011}. The present paper follows the quantitative core of that strategy, but replaces the nonseparable quotient by a separable summation quotient. For $m\in\N$, set $E_m=\spn\{e_1,\ldots,e_m\}\subseteq c_0$, and define
\begin{equation*}
    Q\colon Y=\left(\bigoplus_{m=1}^{\infty}E_m\right)_{\ell_1}\longrightarrow c_0, \qquad Q((u_m)_{m=1}^{\infty})=\sum_{m=1}^{\infty}u_m.
\end{equation*}
The map $Q$ admits an isometric linear lifting from $c_0$ into $Y^{**}$, while Kalton's obstruction for stable spaces implies that it has no uniformly continuous right inverse on $B_{c_0}$. For the equivalent norms
\begin{equation*}
    \norm{y}_n=\max\bigl\{2^{-n}\norm{y}_Y,\norm{Qy}_\infty\bigr\},
\end{equation*}
we prove the quantitative estimates for the bidual Lipschitz retraction constant
\begin{equation*}
    2^{n-1}\leq\Lambda(Y_n)\leq2^n.
\end{equation*}
Taking the $c_0$-sum
\begin{equation*}
    W=\left(\bigoplus_{n=1}^{\infty}Y_n\right)_{c_0}
\end{equation*}
produces an auxiliary separable space. A uniformly continuous retraction from $B_{W^{**}}$ onto $B_W$ would induce coordinate retractions with one common modulus of continuity. On a sufficiently high coordinate, comparison of the bidual lifting with genuine lifts in $Y_n$ then gives a uniformly continuous approximate right inverse for $Q$. The correction lemma upgrades it to an exact uniformly continuous right inverse on $B_{c_0}$, a contradiction. The estimates for $\Lambda(Y_n)$ record the corresponding quantitative obstruction at the Lipschitz level.

We also construct nested contractive finite-rank projections on $W$, and hence a monotone finite-dimensional decomposition. In particular, $W$ has the metric approximation property, and therefore the bounded approximation property. Pe{\l}czy\'nski's theorem then embeds $W$ as a complemented subspace of a Banach space with a Schauder basis. A transfer lemma shows that the unit-ball nonretraction property passes from a complemented subspace to the containing space and is invariant under isomorphism. An equivalent renorming makes the resulting basis monotone and completes the proof of \Cref{thm:main}.

The paper is organised as follows. In \Cref{sec:preliminaries}, we introduce the bidual Lipschitz retraction constant and establish the auxiliary norm and correction lemmas. In \Cref{sec:renorming}, we prove a general amplification theorem which, although not needed for the proof of the main uniform result, clarifies the quantitative mechanism underlying our choice of renormings. In \Cref{sec:quotient}, we analyse the summation quotient, construct a lifting into its bidual, and derive two-sided estimates for the associated renormed spaces. Finally, in \Cref{sec:counterexample}, we assemble these spaces into the auxiliary $c_0$-sum, prove the unit-ball obstruction, transfer it to spaces with Schauder bases, and record both the limitation of this transfer route and the Lipschitz approximability of the final space with optimal common constant $1$.

\bigskip
\section{Notation and preliminary results}\label{sec:preliminaries}

We use standard notation and conventions, unless explicitly stated other\-wise. All Banach spaces are over the field $\K\in\{\R,\C\}$. By an \emph{operator}, we always mean a bounded linear map. For a Banach space $X$, we denote by $B_X$ and $S_X$ its closed unit ball and unit sphere, respectively, and identify $X$ with its canonical image in $X^{**}$. We denote by $J_X$ the canonical inclusion of $X$ into its bidual. The identity operator on $X$ is denoted by $I_X$. 

A surjective operator $q\colon X\to Y$ is called a \emph{quotient map} if the norm of $Y$ is the quotient norm induced by $q$, that is,
\begin{equation*}
    \norm{y}=\inf\bigl\{\norm{x}:qx=y\bigr\}\qquad(y\in Y).
\end{equation*}
Equivalently, $q$ maps the open unit ball of $X$ onto the open unit ball of $Y$. This is the convention used in \cite{Kalton2004}.

\begin{definition}\label{def:retraction-constant}
Let $X$ be a Banach space. Its \emph{bidual Lipschitz retraction constant} is
\begin{equation*}
    \Lambda(X)=\inf\bigl\{\Lip(R):R\colon X^{**}\to X,\ \restr{R}{X}=I_X\bigr\},
\end{equation*}
where $\Lambda(X)=\infty$ if there is no Lipschitz retraction from $X^{**}$ onto $X$.
\end{definition}

Let us also recall a standard notion from the isometric theory of Banach spaces.

\begin{definition}
    A Banach space $X$ is called \emph{stable} if whenever $(x_n)_{n \in \N}$ and $(y_n)_{n \in \N}$ are two sequences in $X$, then, provided all limits exist,
    \begin{equation*}
    \lim_{n\to\infty}\lim_{m\to\infty}\|x_n+y_m\|=\lim_{m\to\infty}\lim_{n\to\infty}\|x_n+y_m\|.
    \end{equation*}
\end{definition}

We shall use the following form of an obstruction due to Kalton, which follows immediately from the proof of \cite[Theorem~7.3]{Kalton2004}.

\begin{theorem}[Kalton]\label{thm:kalton}
Let $Z$ be a stable real Banach space and let $q\colon Z\to c_0$ be an operator. Then there is no uniformly continuous map
\begin{equation*}
    \phi\colon B_{c_0}\longrightarrow Z
\end{equation*}
such that $q\phi=I_{B_{c_0}}$.
\end{theorem}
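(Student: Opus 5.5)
Since the statement is attributed to Kalton and said to follow from the proof of \cite[Theorem~7.3]{Kalton2004}, I would reconstruct that argument rather than invent a new one. I will argue by contradiction. Suppose $\phi\colon B_{c_0}\to Z$ is uniformly continuous with $q\phi=I_{B_{c_0}}$. The strategy is to compare two facts. On one side, stable spaces carry an approximately symmetric ``asymptotic'' structure, in the Krivine--Maurey sense: iterated limits of norms commute. On the other side, $c_0$ is badly asymmetric in a way that a uniformly continuous map must transmit through $\phi$.

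The first step is a reduction to a Lipschitz-type condition on a large scale. A uniformly continuous map on a convex set is coarsely Lipschitz: $\norm{\phi(x)-\phi(y)}\le C(1+\norm{x-y}/\delta)$. Because $\phi$ is only defined on $B_{c_0}$, I would rescale. I would work with points of the form $\delta\sum_{i\in A}\pm e_i$ for finite sets $A$ of controlled size, or use Kalton's trick of restricting to the ``interlaced'' sets $\{\sum_{j\le k} e_{n_j}\}$ at scale $1/k$.

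The core step follows Kalton's stable-space argument. Consider the maps $f_k(n_1<\dots<n_k)=\phi\bigl(k^{-1}\sum_{j\le k}e_{n_j}\bigr)$ on $k$-subsets of $\N$. This is a Lipschitz map on the interlaced graph, with constant controlled by the modulus of $\phi$ at scale $1/k$. Kalton's key lemma for stable (more generally, reflexive-asymptotic) targets is this: a Lipschitz map from the Kalton interlaced graph $([\N]^k,d)$ into a stable space admits an infinite $M\subseteq\N$ on which $\norm{f(\bar n)-f(\bar m)}$ stays bounded by a constant independent of $k$ for interlaced tuples $\bar n,\bar m\in[M]^k$. This rests on Ramsey theory together with the stability identity $\lim_n\lim_m=\lim_m\lim_n$, which lets one build a stable ``type'' and exchange limits. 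I would invoke this via the proof in \cite{Kalton2004}. Applying $q$ then bounds $\norm{k^{-1}\sum(e_{n_j}-e_{m_j})}_\infty$ from above. Since that quantity equals $1/k$, this alone gives nothing, so the comparison must be reorganised: use unnormalised tuples at scale $\delta$, $\phi(\delta\sum_{j\le k}e_{n_j})$ with $k$ large. In $Z$ the stable structure then forces $\norm{\phi(\bar n)-\phi(\bar m)}\lesssim \omega(\delta)\,$(sublinear in $k$) for interlaced $\bar n,\bar m$. Meanwhile $q$ is linear and bounded, so $\norm{q(\phi\bar n-\phi\bar m)}_\infty=\delta$, which is too weak a contradiction in itself.

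I expect this to be the main obstacle: choosing the right test configuration so that the $c_0$-side distance is large while the stable side is small. My first attempt would be to exploit that $q^{*}e_i^*$ is weakly null in $Z^*$ and bounded. Take a pointwise limit over $M$ of $\phi(\delta\sum_{j\le k}e_{n_j})$ along an ultrafilter, giving types $\tau_k$ on $Z$. Stability makes these types commute under convolution, so $\tau_k\approx\tau_1^{*k}$, and by uniform continuity $\tau_1$ is close to the type of $\phi(0)$ plus a weakly null part of norm $\lesssim\omega(\delta)$. One then reaches points $\delta\sum_{j\le k}e_{n_j}$ with $k\delta\approx1$ whose images have norm at most $\norm{\phi(0)}+o(1)$. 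Those images can still be separated by $q$ and the weakly null $e^*_{n}\circ q$, and the sub-additivity of types of norm $1$ in $c_0$ versus the additivity enforced in $Z$ produces the contradiction. I would follow Kalton's bookkeeping for this exactly.
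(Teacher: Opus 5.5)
There is a genuine gap, and it is the one you flag yourself. You never exhibit a configuration in $B_{c_0}$ whose $c_0$-distances stay large while stability forces the images under $\phi$ to be close. The closing paragraph on types does not supply one: the relation $\tau_k\approx\tau_1^{*k}$ has no justification for a nonlinear $\phi$, and ``subadditivity versus additivity'' is not an argument.

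Two things go wrong.
\begin{itemize}
\item The key lemma is misstated. A bound for \emph{interlaced} tuples is trivial, since they are adjacent in the interlacing graph. The content of Kalton's property $Q$ concerns \emph{separated} tuples $n_1<\dots<n_k<m_1<\dots<m_k$, which lie at graph distance $k$. For an $L$-Lipschitz $f$ on $([\N]^k,d)$ it gives an infinite $\mathbb M$ with $\norm{f(\bar n)-f(\bar m)}\leq CL$ for all separated tuples in $\mathbb M$, with $C$ independent of $k$.
\item The unit vector basis can never exploit this. For all tuples, whatever their relative position, $\norm{\delta\sum_j e_{n_j}-\delta\sum_j e_{m_j}}_\infty\leq\delta$. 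So pulling distances back through $q$ yields nothing, as you observed twice.
\end{itemize}

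The missing idea is the summing basis. Let $s_n=e_1+\dots+e_n$ and $x_{\bar n}=k^{-1}\sum_{j=1}^k s_{n_j}\in B_{c_0}$.
\begin{itemize}
\item The $i$th coordinate of $x_{\bar n}$ is $k^{-1}\#\{j:n_j\geq i\}$.
\item Hence interlaced tuples satisfy $\norm{x_{\bar n}-x_{\bar m}}_\infty\leq 1/k$.
\item Separated tuples satisfy $\norm{x_{\bar n}-x_{\bar m}}_\infty=1$; look at coordinate $m_1$.
\item Let $\omega_\phi$ be the modulus of continuity of $\phi$. Then $\bar n\mapsto\phi(x_{\bar n})$ is $\omega_\phi(1/k)$-Lipschitz on the interlacing graph.
\item Property $Q$ for the stable space $Z$ (rescaled, which is harmless in a normed space) yields separated tuples with $1=\norm{q\phi(x_{\bar n})-q\phi(x_{\bar m})}_\infty\leq\norm{q}\,C\,\omega_\phi(1/k)$.
\item This fails for large $k$.
\end{itemize}

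For comparison, the paper does not reconstruct Kalton's argument at all. It notes that $\norm{x-y}\leq\norm{q}\norm{\phi(x)-\phi(y)}$ makes $\phi$ a uniform embedding of $B_{c_0}$ into $Z$. It then appeals directly to Kalton's obstruction to such embeddings into stable spaces. Once repaired, your argument amounts to proving that obstruction, granting property $Q$ for stable spaces.
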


Indeed, any such map would be a uniform embedding of $B_{c_0}$ into $Z$, since
\begin{equation*}
    \norm{x-y}_{c_0}\leq\norm{q}\norm{\phi(x)-\phi(y)} \qquad(x,y\in B_{c_0}),
\end{equation*}
contrary to the obstruction used in Kalton's proof.

We shall also use the facts, recorded in the same paper, that the Johnson--Zippin space $C_1$ \cite{JohnsonZippin1972} admits an equivalent stable norm and that every $\ell_1$-sum of finite-dimensional spaces embeds isomorphically into $C_1$; see \cite[pp.~211--213]{Kalton2004}.

The first elementary ingredient identifies the bidual norm produced by the anisotropic renorming used below.

\begin{proposition}\label{prop:bidual-norm}
Let $Y, W$ be Banach spaces and $Q\colon Y\to W$ be an operator, let $\eps>0$, and equip $Y$ with the equivalent norm
\begin{equation*}
    \norm{y}_\eps=\max\bigl\{\eps\norm{y},\norm{Qy}\bigr\}.
\end{equation*}
Denote the resulting Banach space by $Y_\eps$. Under the canonical algebraic identification of $Y_\eps^{**}$ with $Y^{**}$, one has
\begin{equation*}
    \norm{z}_{\eps,**}=\max\bigl\{\eps\norm{z},\norm{Q^{**}z}\bigr\} \qquad(z\in Y^{**}).
\end{equation*}
\end{proposition}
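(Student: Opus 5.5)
We will realise $Y_\eps$ isometrically as a subspace of a direct sum and then use the fact that the bidual of a subspace is its weak$^*$-closure in the bidual of the ambient space.

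Consider the linear map
\begin{equation*}
    T\colon Y_\eps\longrightarrow (Y\oplus W)_{\infty},\qquad Ty=(\eps y,Qy).
\end{equation*}
By definition of $\norm{\cdot}_\eps$, $T$ is an isometric embedding. The bidual of an $\ell_\infty$-sum of two spaces is the $\ell_\infty$-sum of the biduals: $(Y\oplus W)_\infty^{*}=(Y^*\oplus W^*)_1$ and $(Y\oplus W)_\infty^{**}=(Y^{**}\oplus W^{**})_\infty$. Since $T$ is an isometric embedding, $T^{**}$ is also an isometric embedding; this follows from Hahn--Banach, because $T^*$ is a quotient map and hence $T^{**}$ is isometric. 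Moreover $T^{**}z=(\eps z,Q^{**}z)$, where we identify $Y_\eps^{**}$ with $Y^{**}$ algebraically. This identification is legitimate because $\norm{\cdot}_\eps$ is equivalent to $\norm{\cdot}$, so $Y_\eps^*=Y^*$ as sets with equivalent norms, and likewise for the biduals.

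The formula for $T^{**}$ comes from computing $T^*(y^*,w^*)=\eps y^*+Q^*w^*$ and dualising once more. For $(y^*,w^*)\in Y^*\oplus W^*$ we get
\begin{equation*}
    \langle T^{**}z,(y^*,w^*)\rangle=\langle z,\eps y^*+Q^*w^*\rangle=\langle(\eps z,Q^{**}z),(y^*,w^*)\rangle.
\end{equation*}
Combining this with the isometry of $T^{**}$ gives
\begin{equation*}
    \norm{z}_{\eps,**}=\norm{T^{**}z}=\max\bigl\{\eps\norm{z},\norm{Q^{**}z}\bigr\},
\end{equation*}
which is the claim.

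The main point needing care is the bookkeeping of the identification $Y_\eps^{**}\cong Y^{**}$. One must check that the $Q^{**}$ computed with respect to $\norm{\cdot}_\eps$ agrees with the original $Q^{**}$. This holds because adjoints are defined algebraically through the dual pairing, and the pairings do not change under an equivalent renorming.

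An alternative direct route also works. The inequality ``$\geq$'' follows from weak$^*$ lower semicontinuity and Goldstine's theorem: take a bounded net in $Y$ converging weak$^*$ to $z$ with $\norm{y_\alpha}_\eps\to\norm{z}_{\eps,**}$. The inequality ``$\leq$'' follows by computing the dual norm of $\norm{\cdot}_\eps$ as $\inf\{\eps^{-1}\norm{y^*}+\norm{w^*}:\ f=y^*+Q^*w^*\}$. The embedding argument above packages both directions at once.
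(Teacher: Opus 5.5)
Your proposal is correct and is essentially the paper's proof: you use the isometric embedding $y\mapsto(\eps y,Qy)$ of $Y_\eps$ into $Y\oplus_\infty W$, the fact that its second adjoint is an isometric embedding into $Y^{**}\oplus_\infty W^{**}$, and the evaluation $T^{**}z=(\eps z,Q^{**}z)$ against $Y^*\oplus_1 W^*$. Your opening remark about weak$^*$-closures is never actually used, but the rest, including the justification of the isometry of $T^{**}$ via $T^*$ being a quotient map, is sound.
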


\begin{proof}
Define
\begin{equation*}
    T_\eps\colon Y_\eps\longrightarrow Y\oplus_\infty W, \qquad T_\eps y=(\eps y,Qy).
\end{equation*}
The map $T_\eps$ is an isometry. Hence its second adjoint is an isometric embedding
\begin{equation*}
    T_\eps^{**}\colon Y_\eps^{**}\longrightarrow Y^{**}\oplus_\infty W^{**}.
\end{equation*}
For $z\in Y^{**}$, direct evaluation on $Y^*\oplus_1W^*$ gives
\begin{equation*}
    T_\eps^{**}z=(\eps z,Q^{**}z).
\end{equation*}
Taking the $\ell_\infty$-norm proves the formula.
\end{proof}

We shall also need a standard iterative correction of an approximate section. The proof is included because the quantitative value $1$ will be important later.

\begin{lemma}\label{lem:correction}
Let $E, W$ be Banach spaces and $q\colon E\to W$ be a quotient map. Suppose that a bounded uniformly continuous map
\begin{equation*}
    \phi\colon S_W\longrightarrow E
\end{equation*}
satisfies
\begin{equation*}
    \norm{q\phi(x)-x}\leq\lambda \qquad(x\in S_W)
\end{equation*}
for some $0\leq\lambda<1$. Then $q$ has a uniformly continuous right inverse on $B_W$.
\end{lemma}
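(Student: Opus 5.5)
We first extend $\phi$ to all of $B_W$ by homogeneity, and then run a standard successive-approximation scheme in which each step corrects the error left by the previous one.

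\emph{Extension to the ball.} Let $M=\sup_{x\in S_W}\norm{\phi(x)}<\infty$. Since $\phi$ is only given on $S_W$, we set
\begin{equation*}
    \psi(x)=\norm{x}\,\phi\bigl(x/\norm{x}\bigr)\quad(x\neq0),\qquad \psi(0)=0 .
\end{equation*}
By homogeneity of the error estimate, $\norm{q\psi(x)-x}\le\lambda\norm{x}$ and $\norm{\psi(x)}\le M\norm{x}$ for all $x\in B_W$. We check that $\psi$ is uniformly continuous on $B_W$.
\begin{itemize}
\item Near $0$, continuity follows from the bound $\norm{\psi(x)}\le M\norm{x}$.
\item If $\norm{x},\norm{y}\ge\delta$ and $\norm{x-y}$ is small, then $\norm{x/\norm{x}-y/\norm{y}}\le 2\norm{x-y}/\delta$. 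Hence uniform continuity of $\phi$ together with boundedness gives uniform control of $\norm{\psi(x)-\psi(y)}$.
\item If one of $\norm{x},\norm{y}$ is below $\delta$ and $\norm{x-y}$ is small, then both vectors have norm below $2\delta$, so both images have norm below $2M\delta$.
\end{itemize}
A standard $\delta$-splitting combines these three cases. More generally, by homogeneity, $\psi$ is uniformly continuous on every ball $rB_W$ with modulus scaling linearly in $r$: $\psi(rx)=r\psi(x)$.

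\emph{Iteration.} Define $x_0=x$ and recursively
\begin{equation*}
    x_{k+1}=x_k-q\psi(x_k),
\end{equation*}
where $\psi$ is extended homogeneously to all of $W$. By induction, $\norm{x_k}\le\lambda^k\norm{x}\le\lambda^k$. Put
\begin{equation*}
    \Phi(x)=\sum_{k\ge0}\psi(x_k).
\end{equation*}
The series converges absolutely because $\norm{\psi(x_k)}\le M\lambda^k$. The partial sums telescope to
\begin{equation*}
    q\Bigl(\sum_{k<N}\psi(x_k)\Bigr)=x-x_N\longrightarrow x,
\end{equation*}
so $q\Phi=I_{B_W}$. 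This argument does not use that $q$ is a quotient map, only that $q$ is linear and bounded. The quotient hypothesis would matter only for the later quantitative remark.

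\emph{Uniform continuity of $\Phi$ (the main obstacle).} The maps $x\mapsto x_k$ are compositions of $k$ uniformly continuous maps, so each is uniformly continuous on $B_W$, but with no control on the modulus as $k$ grows. We therefore handle the head and the tail of the series separately. Given $\eta>0$, choose $N$ with
\begin{equation*}
    \sum_{k\ge N}2M\lambda^k<\eta/2 ,
\end{equation*}
so the tail contributes less than $\eta/2$ to $\norm{\Phi(x)-\Phi(y)}$ regardless of $x,y$. The first $N$ terms form a finite sum. Each $x\mapsto x_k$ maps $B_W$ into $\lambda^kB_W\subseteq B_W$, and $\psi$ is uniformly continuous there, so each head term $\psi(x_k)$ is uniformly continuous. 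Hence there is $\delta>0$ such that $\norm{x-y}<\delta$ makes the head difference smaller than $\eta/2$. Combining the two estimates shows that $\Phi$ is uniformly continuous on $B_W$, which completes the plan.
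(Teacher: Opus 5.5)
Your proposal is correct and follows essentially the same route as the paper: extend $\phi$ positively homogeneously to $B_W$, iterate $x\mapsto x-q\psi(x)$ (which contracts norms by the factor $\lambda$), sum the resulting geometrically decaying corrections, and obtain $q\Phi=I_{B_W}$ by telescoping. The differences are cosmetic. You use a $\delta$-splitting where the paper gives an explicit modulus with threshold $\sqrt d$, and your head/tail estimate unpacks the paper's appeal to uniform convergence of uniformly continuous partial sums; you are also right that only boundedness and linearity of $q$ are actually used.
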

\begin{proof}
If $W=\{0\}$, the conclusion is immediate, so assume that $W\neq\{0\}$. Extend $\phi$ positively homogeneously by defining
\begin{equation*}
    \Phi\colon B_W\longrightarrow E,\qquad \Phi(0)=0,\qquad \Phi(x)=\norm{x}\phi\left(\frac{x}{\norm{x}}\right)\quad(x\neq 0).
\end{equation*}
Since $\phi$ is bounded, the constant
\begin{equation*}
    C=\sup_{u\in S_W}\norm{\phi(u)}
\end{equation*}
is finite, and
\begin{equation*}
    \norm{\Phi(x)}\leq C\norm{x}\qquad(x\in B_W).
\end{equation*}
For $0\leq t\leq 2$, let
\begin{equation*}
    \omega(t)=\sup\bigl\{\norm{\phi(u)-\phi(v)}:u,v\in S_W,\ \norm{u-v}\leq t\bigr\}
\end{equation*}
be the modulus of uniform continuity of $\phi$. Then $\omega$ is nondecreasing and $\omega(t)$ tends to zero as $t$ tends to zero.

We first verify that $\Phi$ is uniformly continuous. It is enough to bound $\norm{\Phi(x)-\Phi(y)}$ by a quantity depending only on $\norm{x-y}$ and tending to zero as $\norm{x-y}$ tends to zero. Let $x,y\in B_W$, and, after interchanging $x$ and $y$ if necessary, set
\begin{equation*}
    r=\norm{x}\geq s=\norm{y},\qquad d=\norm{x-y}.
\end{equation*}
We may assume that $0<d\leq 1$. By the reverse triangle inequality,
\begin{equation*}
    0\leq r-s\leq d.
\end{equation*}

Suppose first that $s\leq\sqrt d$. Then
\begin{equation*}
    r+s=(r-s)+2s\leq d+2\sqrt d,
\end{equation*}
and therefore
\begin{equation*}
    \norm{\Phi(x)-\Phi(y)}\leq\norm{\Phi(x)}+\norm{\Phi(y)}\leq C(r+s)\leq C(d+2\sqrt d).
\end{equation*}

Suppose now that $s>\sqrt d$. Then $r\geq s>0$, so both $x$ and $y$ are nonzero. Moreover,
\begin{align*}
    \norm{\frac{x}{r}-\frac{y}{s}}
    &\leq \frac{\norm{x-y}}{r}+\norm{y}\left|\frac{1}{r}-\frac{1}{s}\right|\\
    &=\frac{d}{r}+\frac{r-s}{r}\\
    &\leq\frac{2d}{r}\leq\frac{2d}{s}<2\sqrt d.
\end{align*}
Consequently,
\begin{align*}
    \norm{\Phi(x)-\Phi(y)}
    &=\norm{r\phi\left(\frac{x}{r}\right)-s\phi\left(\frac{y}{s}\right)}\\
    &\leq (r-s)\norm{\phi\left(\frac{x}{r}\right)}+s\norm{\phi\left(\frac{x}{r}\right)-\phi\left(\frac{y}{s}\right)}\\
    &\leq C(r-s)+s\omega(2\sqrt d)\\
    &\leq Cd+\omega(2\sqrt d).
\end{align*}
In both cases, the resulting bound depends only on $d$ and tends to zero as $d$ tends to zero. Hence $\Phi$ is uniformly continuous on $B_W$.

The assumption on $\phi$ extends homogeneously from $S_W$ to $B_W$. Indeed, for every nonzero $x\in B_W$,
\begin{equation*}
    \norm{q\Phi(x)-x}=\norm{x}\norm{q\phi\left(\frac{x}{\norm{x}}\right)-\frac{x}{\norm{x}}}\leq\lambda\norm{x}.
\end{equation*}
The same inequality is immediate when $x=0$. Define
\begin{equation*}
    g\colon B_W\longrightarrow B_W,\qquad g(x)=x-q\Phi(x).
\end{equation*}
Since $\Phi$ is uniformly continuous and $q$ is bounded and linear, the map $g$ is uniformly continuous. Moreover,
\begin{equation*}
    \norm{g(x)}\leq\lambda\norm{x}\qquad(x\in B_W),
\end{equation*}
which also shows that $g(B_W)\subseteq B_W$.

Let $g^{\circ 0}$ denote the identity map on $B_W$, and let $g^{\circ k}$ denote the $k$-fold iterate of $g$ for $k\geq 1$. By an induction argument we get
\begin{equation*}
    \norm{g^{\circ k}(x)}\leq\lambda^k\norm{x}\qquad(x\in B_W,\ k\geq 0).
\end{equation*}
Define
\begin{equation*}
    s\colon B_W\longrightarrow E,\qquad s(x)=\sum_{k=0}^{\infty}\Phi\bigl(g^{\circ k}(x)\bigr).
\end{equation*}
For every $x\in B_W$ and $k\geq 0$,
\begin{equation*}
    \norm{\Phi\bigl(g^{\circ k}(x)\bigr)}\leq C\norm{g^{\circ k}(x)}\leq C\lambda^k.
\end{equation*}
Since $E$ is complete and the series $\sum_{k=0}^{\infty}C\lambda^k$ converges, the series defining $s$ converges uniformly on $B_W$.

For each $k\geq 0$, the map $\Phi\circ g^{\circ k}$ is uniformly continuous. Hence every partial sum of the series defining $s$ is uniformly continuous. Since these partial sums converge uniformly to $s$, the map $s$ is uniformly continuous.

Finally, the definition of $g$ gives
\begin{equation*}
    q\Phi(v)=v-g(v)\qquad(v\in B_W).
\end{equation*}
For $N\geq 0$, let
\begin{equation*}
    s_N(x)=\sum_{k=0}^{N}\Phi\bigl(g^{\circ k}(x)\bigr).
\end{equation*}
Then
\begin{equation*}
    qs_N(x) =\sum_{k=0}^{N}\bigl(g^{\circ k}(x)-g^{\circ(k+1)}(x)\bigr) =x-g^{\circ(N+1)}(x).
\end{equation*}
As $N$ tends to infinity, $s_N(x)$ converges to $s(x)$ and
\begin{equation*}
    \norm{g^{\circ(N+1)}(x)}\leq\lambda^{N+1}\norm{x}\longrightarrow 0.
\end{equation*}
Since $q$ is continuous, it follows that
\begin{equation*}
    qs(x)=x\qquad(x\in B_W).
\end{equation*}
Thus $s$ is a uniformly continuous right inverse of $q$ on $B_W$.
\end{proof}

\bigskip
\section{A quantitative renorming argument}\label{sec:renorming}

The following amplification theorem is not needed for the proof of our main uniform counterexample. Its purpose is to explain quantitatively the choice of renormings and to show that the bidual Lipschitz retraction constants of the coordinate spaces tend to infinity. This will also give an alternative proof of the Lipschitz obstruction for the auxiliary space used in the proof of \Cref{thm:main}.

\begin{theorem}\label{thm:amplification}
Let $Y, W$ be Banach spaces and let $Q\colon Y\to W$ be a quotient map. Suppose that there is an operator
\begin{equation*}
    S\colon W\longrightarrow Y^{**}
\end{equation*}
with
\begin{equation*}
    Q^{**}S=J_W \qquad\text{and}\qquad \norm{S}=M,
\end{equation*}
where $J_W\colon W\to W^{**}$ is the canonical embedding. Suppose also that $Q$ has no uniformly continuous right inverse on $B_W$.

Let $0<\eps<1$ satisfy $\eps M\leq1$, and equip $Y$ with the equivalent norm
\begin{equation*}
    \norm{y}_\eps=\max\bigl\{\eps\norm{y},\norm{Qy}\bigr\}.
\end{equation*}
Then
\begin{equation*}
    \Lambda(Y_\eps)\geq\frac{1}{\eps(M+1)}.
\end{equation*}
\end{theorem}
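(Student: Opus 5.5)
Suppose, towards a contradiction, that $R\colon Y_\eps^{**}\to Y_\eps$ is a Lipschitz retraction with $L=\Lip(R)<1/(\eps(M+1))$, Lipschitz constants being computed for $\norm{\cdot}_\eps$ and $\norm{\cdot}_{\eps,**}$. We build from $R$ and the lifting $S$ an approximate section of $Q$ on $S_W$ with error $\lambda<1$. \Cref{lem:correction} then yields a uniformly continuous right inverse of $Q$ on $B_W$, contradicting the hypothesis.

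\textbf{Norm bookkeeping.} By \Cref{prop:bidual-norm}, for $w\in W$ we have $\norm{Sw}_{\eps,**}=\max\{\eps\norm{Sw},\norm{Q^{**}Sw}\}=\max\{\eps\norm{Sw},\norm{w}\}$. Since $\eps M\le1$, this gives $\norm{Sw}_{\eps,**}\le\norm{w}$, so $S\colon W\to Y_\eps^{**}$ is a contraction. Since $Q$ is a quotient map, each $w\in S_W$ and each $\delta>0$ admit some $y_w\in Y$ with $Qy_w=w$ and $\norm{y_w}\le 1+\delta$. Then $Sw-y_w\in\ker Q^{**}$, so
\[
\norm{Sw-y_w}_{\eps,**}=\eps\norm{Sw-y_w}\le\eps(M+1+\delta).
\]

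\textbf{The approximate section.} Define $\phi\colon S_W\to Y$ by $\phi(w)=R(Sw)$. It is Lipschitz, hence uniformly continuous, as a map into $(Y,\norm{\cdot})$, because $\norm{\cdot}\le\eps^{-1}\norm{\cdot}_\eps$ and
\[
\norm{\phi(w)-\phi(w')}_\eps\le L\norm{S(w-w')}_{\eps,**}\le L\norm{w-w'}.
\]
It is also bounded, since $\norm{R(Sw)}_\eps\le\norm{R(0)}_\eps+L$ and $R(0)=0$. Since $R$ fixes $y_w$ and $\norm{Q y}\le\norm{y}_\eps$,
\[
\norm{Q\phi(w)-w}=\norm{Q(R(Sw)-R(y_w))}\le\norm{R(Sw)-R(y_w)}_\eps\le L\norm{Sw-y_w}_{\eps,**}\le L\eps(M+1+\delta).
\]
Choosing $\delta$ small enough that $\lambda:=L\eps(M+1+\delta)<1$, \Cref{lem:correction} applies with $E=Y$ and $q=Q$, and produces a uniformly continuous right inverse of $Q$ on $B_W$. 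This is the contradiction, so $\Lambda(Y_\eps)\ge 1/(\eps(M+1))$. This covers the case $\Lambda(Y_\eps)<\infty$; if $\Lambda(Y_\eps)=\infty$ the inequality holds trivially.

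\textbf{The main obstacle} is the choice of norms. The retraction must be measured in $\norm{\cdot}_\eps$ so that the error estimate is controlled by the small quantity $\eps\norm{Sw-y_w}$. Uniform continuity of $\phi$, by contrast, is needed in the original norm of $Y$, where it costs a factor $\eps^{-1}$. That factor is harmless because \Cref{lem:correction} only needs uniform continuity, with no control on the modulus. We also need $\norm{Sw}_{\eps,**}\le 1$, so that $S$ is Lipschitz into the renormed bidual, and this is exactly where the hypothesis $\eps M\le1$ enters.
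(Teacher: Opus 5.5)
Your proof is correct and follows the paper's argument essentially step for step. You compare $R(Sw)$ with the fixed point $R(y_w)=y_w$, use that $Sw-y_w\in\ker Q^{**}$ has $\norm{\cdot}_{\eps,**}$-norm $\eps\norm{Sw-y_w}\le\eps(M+1+\delta)$, and then invoke \Cref{lem:correction}. The only cosmetic difference is that you drop the paper's side condition $\eps(1+\delta)\le 1$, which is harmless: that condition only serves to place $y_w$ in $B_{Y_\eps}$, and this is unnecessary because $R$ fixes all of $Y_\eps$.
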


\begin{proof}
Suppose that
\begin{equation*}
    R\colon Y_\eps^{**}\longrightarrow Y_\eps
\end{equation*}
is an $L$-Lipschitz retraction. For $x\in S_W$, \Cref{prop:bidual-norm} and the assumption $\eps M\leq 1$ give
\begin{equation*}
    \norm{Sx}_{\eps,**}  =\max\bigl\{\eps\norm{Sx},\norm{Q^{**}Sx}\bigr\} =\max\bigl\{\eps\norm{Sx},\norm{x}\bigr\} =1.
\end{equation*}

Let $\delta>0$ be arbitrary subject to
\begin{equation*}
    \eps(1+\delta)\leq1.
\end{equation*}
Since $Q$ is a quotient map, for each $x\in S_W$ there is $y_x\in Y$ such that
\begin{equation*}
    Qy_x=x \qquad\text{and}\qquad \norm{y_x}<1+\delta.
\end{equation*}
It follows that $y_x\in B_{Y_\eps}$. Moreover,
\begin{equation*}
    Q^{**}(Sx-y_x)=0,
\end{equation*}
and hence another application of \Cref{prop:bidual-norm} gives
\begin{equation*}
    \norm{Sx-y_x}_{\eps,**} =\eps\norm{Sx-y_x} <\eps(M+1+\delta).
\end{equation*}
Since $R(y_x)=y_x$, we obtain
\begin{align*}
    \norm{QRSx-x}
    &=\norm{Q(RSx-y_x)}\leq\norm{RSx-y_x}_\eps\\
    &=\norm{RSx-Ry_x}_\eps<L\eps(M+1+\delta).
\end{align*}

The map $x\mapsto RSx$ is Lipschitz from $S_W$ into $Y_\eps$. Since the identity map from $Y_\eps$ to the original space $Y$ has norm at most $1/\eps$, the same map is bounded and uniformly continuous as a map from $S_W$ into $Y$.

If
\begin{equation*}
    L\eps(M+1+\delta)<1,
\end{equation*}
then \Cref{lem:correction} implies that $Q$ has a uniformly continuous right inverse on $B_W$, contrary to the hypothesis. Therefore,
\begin{equation*}
    L\eps(M+1+\delta)\geq 1
\end{equation*}
for every $\delta>0$ satisfying $\eps(1+\delta)\leq1$. Letting $\delta$ tend to zero, we obtain
\begin{equation*}
    L\eps(M+1)\geq 1.
\end{equation*}
Therefore,
\begin{equation*}
    L\geq\frac{1}{\eps(M+1)},
\end{equation*}
which proves the result.
\end{proof}

\begin{remark}\label{rem:no-regularity}
The choice $x\mapsto y_x$ in the proof of \Cref{thm:amplification} is not required to have any regularity. The points $y_x$ are used only as fixed points of the retraction against which the bidual lifting $Sx$ is compared. The resulting approximate section is the regular map $x\mapsto RSx$.
\end{remark}

\bigskip
\section{The summation quotient onto \texorpdfstring{$c_0$}{c0}}\label{sec:quotient}

For $m\in\N$, let
\begin{equation*}
    E_m=\spn\{e_1,\ldots,e_m\}\subseteq c_0.
\end{equation*}
Thus $E_m$ is isometric to $\ell_\infty^m$. Define
\begin{equation*}
    Y=\left(\bigoplus_{m=1}^{\infty}E_m\right)_{\ell_1}
\end{equation*}
and
\begin{equation*}
    Q\colon Y\longrightarrow c_0, \qquad Q((u_m)_{m=1}^{\infty})=\sum_{m=1}^{\infty}u_m.
\end{equation*}
The series converges absolutely in $c_0$. The relevant quotient and lifting properties of this construction are captured in the following proposition.

\begin{proposition}\label{prop:quotient-lifting}
The map $Q\colon Y\to c_0$ is a quotient map. Moreover, there is a linear isometry
\begin{equation*}
    S\colon c_0\longrightarrow Y^{**}
\end{equation*}
such that
\begin{equation*}
    Q^{**}S=J_{c_0}.
\end{equation*}
\end{proposition}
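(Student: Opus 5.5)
The plan has two parts: first check that $Q$ is a quotient map, then build $S$ as a weak$^*$ limit of truncation embeddings into $Y$.

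\emph{Quotient map.} For $u=(u_m)\in Y$ we have
\[
\norm{Qu}_\infty\le\sum_m\norm{u_m}_\infty=\norm{u}_Y,
\]
so $\norm{Q}\le1$. Now fix $x\in c_0$ and $\delta>0$. Because finitely supported vectors are dense, we can choose indices $m_1<m_2<\cdots$ so that the tails beyond $m_k$ are geometrically small. Writing $P_m$ for the projection onto the first $m$ coordinates, we have $P_mx\in E_m$. Set
\[
u_{m_1}=P_{m_1}x,\qquad u_{m_k}=P_{m_k}x-P_{m_{k-1}}x\in E_{m_k},
\]
and all other components zero. Then $Qu=x$. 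Each block satisfies $\norm{u_{m_k}}_\infty\le\norm{x-P_{m_{k-1}}x}_\infty$, and we choose the $m_k$ so that these are at most $\delta 2^{-k}$ for $k\ge2$. This gives $\norm{u}_Y\le\norm{x}_\infty+\delta$. Hence the open unit ball of $Y$ maps onto the open unit ball of $c_0$, and $\norm{Q}=1$, so $Q$ is a quotient map.

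\emph{The lifting $S$.} For each $m$, let $\iota_m\colon E_m\to Y$ be the inclusion into the $m$-th summand and set $S_m=\iota_mP_m\colon c_0\to Y$. Each $S_m$ has norm $1$ and satisfies $QS_m=P_m$. Identify $Y^*=(\bigoplus E_m^*)_{\ell_\infty}$, so that $y^*=(\varphi_m)$ with $\varphi_m\in E_m^*=\ell_1^m$ and $\sup_m\norm{\varphi_m}_1<\infty$. For $x\in c_0$ we then have
\[
\langle S_mx,y^*\rangle=\varphi_m(P_mx).
\]
Fix a free ultrafilter $\mathcal U$ on $\N$ and define
\[
\langle Sx,y^*\rangle=\lim_{\mathcal U}\varphi_m(P_mx).
\]
This limit exists because $|\varphi_m(P_mx)|\le\norm{y^*}\norm{x}$. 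The map $S$ is linear and satisfies $\norm{Sx}\le\norm{x}$.

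To identify $Q^{**}S$, note that for $f\in\ell_1=c_0^*$ we have $Q^*f=(f|_{E_m})_m$. Therefore
\[
\langle Q^{**}Sx,f\rangle=\lim_{\mathcal U}f(P_mx)=f(x),
\]
since $P_mx\to x$ in norm. So $Q^{**}S=J_{c_0}$. Finally,
\[
\norm{x}=\norm{Q^{**}Sx}\le\norm{Sx},
\]
so $S$ is an isometry.

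The only delicate point is the first part, choosing the decomposition so that the $\ell_1$ norm stays close to $\norm{x}_\infty$. The telescoping blocks together with rapidly decaying tails handle this. The second part is routine once we use the weak$^*$ ultrafilter limit; a Banach limit would do equally well.
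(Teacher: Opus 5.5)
Your proof is correct, and for the lifting it takes a genuinely different and simpler route than the paper.

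\textbf{Quotient-map part.} This is essentially the paper's argument: telescoping blocks $P_{m_k}x-P_{m_{k-1}}x$ placed in the $m_k$-th summand, with geometrically decaying tails. You run it for a single vector with an additive error $\delta$, where the paper uses a multiplicative factor $1+\eta$.

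\textbf{Lifting part.} The paper builds exact linear local liftings $T_{F,\eta}\colon F\to Y$ on each finite-dimensional $F\subseteq c_0$. These satisfy $QT_{F,\eta}=I_F$ and $\norm{T_{F,\eta}}\le 1+\eta$, which is why the $m_k$ there are chosen uniformly on $B_F$. The paper then takes a weak$^*$ ultralimit along an ultrafilter on the directed set of pairs $(F,\eta)$, and gets $\norm{S}\le 1$ only after letting $\eta\to0$.

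You instead use the globally defined norm-one operators $S_m=\iota_mP_m$. These are only approximate liftings, since $QS_m=P_m$. The error is absorbed by the weak$^*$-continuity of $Q^{**}$ together with $P_mx\to x$ in norm. As a result, a free ultrafilter on $\N$ suffices, linearity and $\norm{S}\le 1$ are immediate, and no telescoping or exact local lifting is needed for this part.

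\textbf{What each route buys.} Your argument exploits that each $E_m$ sits in $Y$ as a norm-one complemented summand and that $P_m\to I_{c_0}$ strongly. More generally, it shows that any uniformly bounded sequence $S_m\colon W\to Y$ with $QS_m\to I_W$ pointwise yields a bidual lifting of norm at most $\sup_m\norm{S_m}$. The paper's route is the standard template for quotient maps admitting uniformly bounded exact local liftings. It also lets a single construction give both the quotient property (take $F=\spn\{x\}$) and the lifting $S$.
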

\begin{proof}
Let $P_m\colon c_0\to E_m$ denote the coordinate truncation. The estimate
\begin{equation*}
    \norm{\sum_{m=1}^{\infty}u_m}_\infty \leq\sum_{m=1}^{\infty}\norm{u_m}_\infty
\end{equation*}
shows that $\norm{Q}\leq1$.

Let $F\subseteq c_0$ be finite-dimensional and let $\eta>0$. Since $P_m\to I_{c_0}$ uniformly on $B_F$, choose $0<r<1$ and integers
\begin{equation*}
    m_1<m_2<\ldots
\end{equation*}
such that
\begin{equation*}
    \frac{r(1+r)}{1-r}<\eta \qquad\text{and}\qquad \norm{\restr{I_{c_0}-P_{m_k}}{F}}\leq r^k \quad(k\in\N).
\end{equation*}
For $x\in F$, define the local lifting $T_{F,\eta}x\in Y$ by
\begin{equation*}
    (T_{F,\eta}x)_{m_1}=P_{m_1}x,
\end{equation*}
\begin{equation*}
    (T_{F,\eta}x)_{m_k}=(P_{m_k}-P_{m_{k-1}})x \qquad(k\geq2),
\end{equation*}
and set all remaining coordinates equal to zero. The series telescopes, and therefore
\begin{equation*}
    QT_{F,\eta}x=x.
\end{equation*}
Furthermore,
\begin{equation*}
    \begin{aligned}
        \norm{T_{F,\eta}x}_Y
        &\leq\norm{x}
        +\sum_{k=2}^{\infty}
        \bigl(\norm{(I_{c_0}-P_{m_{k-1}})x}
        +\norm{(I_{c_0}-P_{m_k})x}\bigr)\\
        &\leq\left(1+\frac{r(1+r)}{1-r}\right)\norm{x}\\
        &\leq(1+\eta)\norm{x}.
    \end{aligned}
\end{equation*}
Applying this construction to $F=\spn\{x\}$ shows that $Q$ is onto and that the quotient norm induced by $Q$ is the usual norm of $c_0$. Hence $Q$ is a quotient map.

We next pass from the local liftings to a bidual lifting. Let
\begin{equation*}
    \mathcal D = \bigl\{ (F,\eta): F\subseteq c_0 \text{ is finite-dimensional and } 0<\eta\leq1 \bigr\}.
\end{equation*}
We equip $\mathcal D$ with the order defined by
\begin{equation*}
    (F,\eta)\preceq(G,\delta) \quad\Longleftrightarrow\quad F\subseteq G \text{ and } \delta\leq\eta.
\end{equation*}
Thus, moving forward in $\mathcal D$ amounts to enlarging the
finite-dimensional subspace and decreasing the error parameter. This makes
$\mathcal D$ a directed set, since any two elements $(F,\eta)$ and
$(G,\delta)$ have the common upper bound
\begin{equation*}
    \bigl(F+G,\min\{\eta,\delta\}\bigr).
\end{equation*}

For $i_0\in\mathcal D$, the tail beginning at $i_0$ is the set of all
indices lying beyond $i_0$, namely
\begin{equation*}
    \mathcal D_{i_0} = \{i\in\mathcal D:i_0\preceq i\}.
\end{equation*}
More explicitly, if $i_0=(F_0,\eta_0)$, then
\begin{equation*}
    \mathcal D_{i_0} = \bigl\{ (F,\eta)\in\mathcal D: F_0\subseteq F \text{ and } \eta\leq\eta_0 \bigr\}.
\end{equation*}

The family of tails forms a proper filter base on $\mathcal D$. Indeed, every tail is nonempty, and
\begin{equation*}
    \mathcal D_{(F,\eta)} \cap \mathcal D_{(G,\delta)} = \mathcal D_{\left(F+G,\min\{\eta,\delta\}\right)}.
\end{equation*}
Consequently, the tails generate a proper filter on $\mathcal D$, which, by the ultrafilter lemma, is contained in an ultrafilter $\mathcal U$. In particular, $\mathcal U$ contains every tail.
For $i=(F,\eta)\in\mathcal D$ and $x\in c_0$, put
\begin{equation*}
    v_i(x)=
    \begin{cases}
        T_{F,\eta}x,&x\in F,\\
        0,&x\notin F.
    \end{cases}
\end{equation*}
For each fixed $x$, the net $(v_i(x))_{i\in\mathcal D}$ is bounded. Define
\begin{equation*}
    Sx=w^*\!\operatorname{-lim}_{i\to\mathcal U}J_Yv_i(x).
\end{equation*}
The weak star limit exists in a bounded weak star compact ball of $Y^{**}$.

We claim that $S$ is linear. Fix $x,y\in c_0$ and scalars $a,b$. The set
\begin{equation*}
    A_{x,y}=\bigl\{(F,\eta)\in\mathcal D:x,y\in F\bigr\}
\end{equation*}
contains the tail beginning at
\begin{equation*}
    \bigl(\spn\{x,y\},1\bigr),
\end{equation*}
and hence belongs to $\mathcal U$. If $i=(F,\eta)\in A_{x,y}$, then
$ax+by\in F$, and therefore
\begin{equation*}
    v_i(ax+by)=T_{F,\eta}(ax+by)=aT_{F,\eta}x+bT_{F,\eta}y=av_i(x)+bv_i(y).
\end{equation*}
Thus this identity holds on a set belonging to $\mathcal U$. Applying
$J_Y$ and taking weak star limits along $\mathcal U$, we obtain
\begin{equation*}
    S(ax+by)=w^*\!\operatorname{-lim}_{i\to\mathcal U}J_Yv_i(ax+by).
\end{equation*}
Using the preceding identity on the set $A_{x,y}\in\mathcal U$, it follows
that
\begin{equation*}
    S(ax+by)=w^*\!\operatorname{-lim}_{i\to\mathcal U}\bigl(aJ_Yv_i(x)+bJ_Yv_i(y)\bigr).
\end{equation*}
Since addition and scalar multiplication are weak star continuous, we have
\begin{equation*}
    S(ax+by)=aSx+bSy.
\end{equation*}
Hence $S$ is linear.

We next estimate the norm of $S$. Fix $x\in c_0$ and $\varepsilon>0$. The
set
\begin{equation*}
    A_{x,\varepsilon}=\bigl\{(F,\eta)\in\mathcal D:x\in F\text{ and }\eta<\varepsilon\bigr\}
\end{equation*}
belongs to $\mathcal U$. Indeed, the condition $x\in F$ holds on a tail,
while $\eta\to0$ along $\mathcal U$. For every $i=(F,\eta)$ in this set,
\begin{equation*}
    \norm{v_i(x)}=\norm{T_{F,\eta}x}\leq(1+\eta)\norm{x}\leq(1+\varepsilon)\norm{x}.
\end{equation*}
Since the ball
\begin{equation*}
    (1+\varepsilon)\norm{x}B_{Y^{**}}
\end{equation*}
is weak star closed, it contains the weak star ultralimit $Sx$. Hence
\begin{equation*}
    \norm{Sx}\leq(1+\varepsilon)\norm{x}.
\end{equation*}
As $\varepsilon>0$ was arbitrary,
\begin{equation*}
    \norm{Sx}\leq\norm{x}.
\end{equation*}

Finally, for fixed $x\in c_0$, the set
\begin{equation*}
    A_x=\bigl\{(F,\eta)\in\mathcal D:x\in F\bigr\}
\end{equation*}
belongs to $\mathcal U$. On this set,
\begin{equation*}
    Qv_i(x)=QT_{F,\eta}x=x.
\end{equation*}
Using the identity
\begin{equation*}
    Q^{**}J_Y=J_{c_0}Q
\end{equation*}
and the weak star continuity of $Q^{**}$, we obtain
\begin{equation*}
    Q^{**}Sx=w^*\!\operatorname{-lim}_{i\to\mathcal U}Q^{**}J_Yv_i(x).
\end{equation*}
Consequently,
\begin{equation*}
    Q^{**}Sx=w^*\!\operatorname{-lim}_{i\to\mathcal U}J_{c_0}Qv_i(x)=J_{c_0}x.
\end{equation*}
Since $J_{c_0}$ is an isometry and $Q^{**}$ is contractive,
\begin{equation*}
    \norm{x}=\norm{J_{c_0}x}=\norm{Q^{**}Sx}\leq\norm{Sx}.
\end{equation*}
Combining this with the reverse inequality gives
\begin{equation*}
    \norm{Sx}=\norm{x}
\end{equation*}
for every $x\in c_0$, so $S$ is an isometry.
\end{proof}

Although $Q^{**}$ admits an isometric lifting of the canonical copy of
$c_0$, the quotient map $Q$ itself has no uniformly continuous section on the unit ball, as we show now.

\begin{proposition}\label{prop:no-section}
The quotient $Q\colon Y\to c_0$ has no uniformly continuous right inverse on $B_{c_0}$.
\end{proposition}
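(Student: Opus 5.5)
We argue by contradiction, reducing to Kalton's obstruction (\Cref{thm:kalton}). Suppose $\phi\colon B_{c_0}\to Y$ is uniformly continuous with $Q\phi=I_{B_{c_0}}$. We want a stable Banach space $Z$, a bounded operator $q\colon Z\to c_0$, and a uniformly continuous map $B_{c_0}\to Z$ that is a right inverse of $q$.

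The natural candidate for $Z$ comes from the facts recorded after \Cref{thm:kalton}. The space $Y=(\bigoplus_m E_m)_{\ell_1}$ is an $\ell_1$-sum of finite-dimensional spaces, so there is an isomorphic embedding $j\colon Y\to C_1$. Equip $C_1$ with an equivalent stable norm, which exists by \cite[pp.~211--213]{Kalton2004}.

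The obstacle is that $Q$ is defined only on $j(Y)$, and need not extend to all of $C_1$. We therefore do not extend it. We take $Z=j(Y)$ with the norm inherited from the stable renorming of $C_1$. Stability passes to closed subspaces, since the defining condition only involves sequences in the subspace. Hence $Z$ is a stable Banach space, isomorphic to $Y$ via $j$.

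Now set $q=Q\circ j^{-1}\colon Z\to c_0$. This is bounded, since $j^{-1}\colon Z\to Y$ is bounded and $\norm{Q}\leq 1$.

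Next set $\psi=j\circ\phi\colon B_{c_0}\to Z$. This map is uniformly continuous, because $j$ is a bounded linear map and $\phi$ is uniformly continuous. It also satisfies $q\psi=Q j^{-1}j\phi=Q\phi=I_{B_{c_0}}$.

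One bookkeeping point concerns the scalar field. \Cref{thm:kalton} is stated for real spaces, so in the complex case we first regard everything as real Banach spaces. The map $\phi$ stays uniformly continuous, $Q$ stays a real-linear operator, and the real restrictions of $c_0$ and of the stable norm are still $c_0$ (real, up to isomorphism $c_0(\R)\oplus c_0(\R)\cong c_0$) and stable, respectively.

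It remains to handle $B_{c_0(\C)}\cong B_{c_0(\R^2)}$. Restrict $\psi$ to the real unit ball $B_{c_0(\R)}\subseteq B_{c_0(\C)}$, and compose $q$ with the real-linear projection onto real parts. This gives a real operator $Z\to c_0(\R)$ with a uniformly continuous right inverse on $B_{c_0(\R)}$.

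In every case we have produced exactly the configuration excluded by \Cref{thm:kalton}. This contradiction shows that $Q$ has no uniformly continuous right inverse on $B_{c_0}$.

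The only substantive input is the embedding of $\ell_1$-sums of finite-dimensional spaces into a stable space. That fact is quoted from Kalton, so the step needing care is just the observation that $q$ should be defined on the subspace $j(Y)$ rather than extended to all of $C_1$.
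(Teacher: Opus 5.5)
Your argument is correct and is essentially the paper's proof: putting the stable norm of $C_1$ on $j(Y)$ amounts to pulling it back to an equivalent stable norm on $Y$, and \Cref{thm:kalton} then applies directly. The one variation is the complex case, where the paper takes coordinatewise real parts in $Y$ to reduce to the real summation quotient, whereas you take real parts in $c_0$ and keep the realification of $Y$. Your route works because that realification is still an $\ell_1$-sum of finite-dimensional real spaces, so it embeds into the real $C_1$; the aside about $c_0(\R)\oplus c_0(\R)\cong c_0$ is unnecessary, since only the restriction to $B_{c_0(\R)}$ is used.
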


\begin{proof}
Assume first that the scalar field is $\R$. Kalton records that every $\ell_1$-sum of finite-dimensional spaces embeds isomorphically into the Johnson--Zippin space $C_1$, and that $C_1$ admits an equivalent stable norm; see \cite[pp.~211--213]{Kalton2004}. Stability passes to subspaces, so pulling the restricted stable norm back to $Y$ gives an equivalent stable norm on the same vector space.

Suppose that $Q$ had a uniformly continuous right inverse on $B_{c_0}$ for the original norm of $Y$. Equivalent norms are globally bi-Lipschitz, so the same right inverse would remain uniformly continuous for the stable equivalent norm. The operator $Q$ remains bounded under equivalent renorming, contradicting \Cref{thm:kalton}.

Now suppose that the scalar field is $\C$. If a uniformly continuous right inverse
\begin{equation*}
    \psi\colon B_{c_0(\C)}\longrightarrow \left(\bigoplus_{m=1}^{\infty}\ell_\infty^m(\C)\right)_{\ell_1}
\end{equation*}
existed, its restriction to $B_{c_0(\R)}$, followed by coordinatewise real part, would give a uniformly continuous right inverse for the corresponding real summation quotient. This is impossible by the preceding paragraph.
\end{proof}

For $n\in\N$, equip the vector space $Y$ with the norm
\begin{equation*}
    \norm{y}_n=\max\bigl\{2^{-n}\norm{y}_Y,\norm{Qy}_\infty\bigr\},
\end{equation*}
and denote the resulting Banach space by $Y_n$. We will carry this notation for the rest of the paper. We record the following quantitative statement.

\begin{proposition}\label{prop:two-sided}
For every $n\in\N$,
\begin{equation*}
    2^{n-1}\leq\Lambda(Y_n)\leq2^n.
\end{equation*}
\end{proposition}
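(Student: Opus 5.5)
The lower bound follows directly from \Cref{thm:amplification}; the upper bound comes from an explicit retraction built out of the Lipschitz retraction of $c_0^{**}=\ell_\infty$ onto $c_0$.

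\textbf{Lower bound.} We apply \Cref{thm:amplification} with $W=c_0$, the quotient map $Q$ of \Cref{prop:quotient-lifting}, the isometric lifting $S$ (so $M=1$), and $\eps=2^{-n}$. The hypotheses hold:
\begin{itemize}
\item $0<\eps<1$ and $\eps M\leq 1$ since $n\geq1$;
\item $Q$ has no uniformly continuous right inverse on $B_{c_0}$, by \Cref{prop:no-section}.
\end{itemize}
Hence $\Lambda(Y_n)\geq 1/(2^{-n}\cdot 2)=2^{n-1}$.

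\textbf{Upper bound.} Identify $Y_n^{**}$ with $Y^{**}$ and use the norm formula of \Cref{prop:bidual-norm}. By Lindenstrauss's theorem there is a $2$-Lipschitz retraction $r\colon\ell_\infty\to c_0$; we will need that the constant $2$ is available. The natural first candidate is
\begin{equation*}
R(z)=z - S\bigl(Q^{**}z - r(Q^{**}z)\bigr).
\end{equation*}
This fails, because $z$ itself is generally not in $Y$, so the correction must land in $Y$.

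Instead I would use that $Y^{**}=(\bigoplus E_m)_{\ell_1}^{**}$ and that each $E_m$ is finite-dimensional. Write $Y^{**}=Y\oplus Y^{\perp\perp}$-type: there is a canonical norm-one projection $P$ from $Y^{**}$ onto $Y$, namely the restriction of $z\in Y^{**}$ to the coordinate functionals. Indeed $Y^*=(\bigoplus E_m^*)_{\ell_\infty}$, and the $\ell_1$-sum of finite-dimensional spaces is complemented in its bidual by the band projection, since it is an L-summand. Then $z-Pz\in\ker P$, and the only issue is $Q^{**}(z-Pz)\in\ell_\infty$.

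Define
\begin{equation*}
R(z)=Pz+T\bigl(r(Q^{**}z)-QPz\bigr),
\end{equation*}
where $T$ is a near-isometric homogeneous (or Lipschitz) lifting of $c_0$ into $Y$ with norm at most $1+\eta$. Checks:
\begin{itemize}
\item For $z\in Y$ we have $Pz=z$, $Q^{**}z=Qz\in c_0$, and $r$ fixes it, so $R(z)=z$.
\item $QR(z)=r(Q^{**}z)$, whose Lipschitz constant in the $Q$-component is $2$.
\item The $2^{-n}\norm{\cdot}_Y$ component is controlled by $2^{-n}\norm{P}+2^{-n}\Lip(T)(2\norm{Q^{**}}+\norm{Q})\cdot 2^n$, because $\norm{Q^{**}z}\leq \norm{z}_{n,**}$ while $\norm{Pz}\leq\norm{z}\leq 2^n\norm{z}_{n,**}$.
\end{itemize}

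\textbf{Main obstacle.} Keeping the constant at exactly $2^n$ rather than $C\cdot2^n$ is the hard part, and it needs three things:
\begin{itemize}
\item a Lipschitz (not merely linear-local) lifting $T$ with constant close to $1$, which I would get by combining the local liftings $T_{F,\eta}$ via the truncations $P_m$ in a globally defined way (e.g.\ a fixed geometric choice of $m_k$);
\item careful accounting, using $r$ with constant $2$ so that $\norm{r(Q^{**}z)-QPz}\leq 2\cdot 2^n\norm{z}_{n,**}$ roughly, and then normalising;
\item bounding $\Lambda(Y_n)$ by the infimum over the construction parameters, letting $\eta\to0$.
\end{itemize}
If the constants do not close to $2^n$ exactly, I would alternatively compare with the trivial bound coming from the isomorphism $Y_n\simeq Y$ (Banach–Mazur distance at most $2^n\norm{Q}$) together with $\Lambda(Y)=1$ since $P$ is a norm-one projection. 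That gives $\Lambda(Y_n)\leq 2^n$ directly: $P$ is linear with $\norm{P}_{Y_n^{**}\to Y_n}\leq \sup\max\{2^{-n}\norm{Pz},\norm{QPz}\}/\norm{z}_{n,**}\leq 2^n$, because $\norm{QPz}\leq\norm{z}\leq 2^n\norm{z}_{n,**}$. This last route is what I would actually pursue, since it is the cleanest.
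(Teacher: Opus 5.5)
Your lower bound and the upper-bound route you finally commit to are exactly the paper's proof; that route is the canonical norm-one projection $P=J_Z^*$ of $Y^{**}=Z^{***}$ onto $Y=Z^*$, where $Z=(\bigoplus_m E_m^*)_{c_0}$, used as a linear retraction $Y_n^{**}\to Y_n$ with $\norm{Pz}_n\leq\norm{Pz}_Y\leq\norm{z}_{Y^{**}}\leq2^n\norm{z}_{n,**}$. Drop the earlier construction $R(z)=Pz+T\bigl(r(Q^{**}z)-QPz\bigr)$ altogether: its failure is not a matter of constants, because a Lipschitz right inverse $T\colon c_0\to Y$ of $Q$ is ruled out by \Cref{prop:no-section}, and such a $T$ would make $\Lambda(Y_n)$ bounded independently of $n$, contradicting your own lower bound.
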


\begin{proof}
By \Cref{prop:quotient-lifting,prop:no-section}, the assumptions of \Cref{thm:amplification} hold with $W=c_0$, $M=1$, and $\eps=2^{-n}$. Hence
\begin{equation*}
    \Lambda(Y_n)\geq\frac{1}{2\cdot2^{-n}}=2^{n-1}.
\end{equation*}

For the upper estimate, put
\begin{equation*}
    Z=\left(\bigoplus_{m=1}^{\infty}E_m^*\right)_{c_0}.
\end{equation*}
Since each $E_m$ is finite-dimensional, $Y=Z^*$ isometrically. Consequently, the canonical map
\begin{equation*}
    P=J_Z^*\colon Y^{**}=Z^{***}\longrightarrow Z^*=Y
\end{equation*}
is a norm-one projection onto the canonical copy of $Y$. Under the algebraic identification of $Y_n^{**}$ with $Y^{**}$, \Cref{prop:bidual-norm} gives
\begin{equation*}
    \norm{z}_{Y^{**}}\leq2^n\norm{z}_{n,**} \qquad(z\in Y^{**}).
\end{equation*}
Moreover,
\begin{equation*}
        \norm{Pz}_n =\max\bigl\{2^{-n}\norm{Pz}_Y,\norm{QPz}_\infty\bigr\}\leq\norm{Pz}_Y\leq\norm{z}_{Y^{**}}\leq2^n\norm{z}_{n,**}.
\end{equation*}
Thus $P\colon Y_n^{**}\to Y_n$ is a linear retraction of norm at most $2^n$, which proves the upper estimate.
\end{proof}

The estimates in \Cref{prop:two-sided} are not needed to rule out a uniformly continuous retraction between the unit balls of the auxiliary space in \Cref{sec:counterexample}. They instead provide a quantitative Lipschitz counterpart to that argument and yield an alternative proof that this auxiliary space is not a Lipschitz retract of its bidual; see \Cref{rem:mechanism}.

\bigskip
\section{The uniform counterexample}\label{sec:counterexample}

We first combine the renormed spaces from \Cref{sec:quotient} in one auxiliary separable Banach space. Put
\begin{equation*}
    W=\left(\bigoplus_{n=1}^{\infty}Y_n\right)_{c_0}.
\end{equation*}
The space $W$ is separable because every $Y_n$ is separable, and its bidual is canonically isometric to
\begin{equation*}
    W^{**}=\left(\bigoplus_{n=1}^{\infty}Y_n^{**}\right)_{\ell_\infty}.
\end{equation*}

\begin{proposition}\label{prop:auxiliary-space}
    The space $W$ has a monotone finite-dimensional decomposition, and $B_W$ is not a uniformly continuous retract of $B_{W^{**}}$.
\end{proposition}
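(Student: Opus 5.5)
The proposition has two independent parts, and I would prove them separately: first build explicit nested contractive projections on each $Y_n$ and lift them to $W$, then run the argument of \Cref{thm:amplification} uniformly in one fixed coordinate.

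\emph{The monotone FDD.} The naive block truncations of $Y$ are not contractive for $\norm{\cdot}_n$, because $\norm{\sum_{m\le N}u_m}_\infty$ may exceed $\norm{Qy}_\infty$. So I would use a projection that truncates coordinates and collapses the tail blocks into block $N$. For $N\in\N$ and $y=(u_m)_m\in Y$, write $P^{c_0}_N$ for truncation to the first $N$ coordinates and define $P_Ny$ by
\begin{equation*}
    (P_Ny)_m=P^{c_0}_Nu_m\ (m<N),\qquad (P_Ny)_N=\sum_{m\ge N}P^{c_0}_Nu_m,\qquad (P_Ny)_m=0\ (m>N).
\end{equation*}
This is well defined because $E_N$ has dimension $N$.

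The following properties should each be a short check:
\begin{enumerate}[label=(\roman*)]
\item $P_N$ has finite rank.
\item $QP_Ny=P^{c_0}_NQy$, so $\norm{QP_Ny}_\infty\le\norm{Qy}_\infty$.
\item $\norm{P_Ny}_Y\le\norm{y}_Y$ by the triangle inequality.
\item Together, (ii) and (iii) give $\norm{P_N}_{Y_n\to Y_n}\le 1$ for every $n$.
\item $P_NP_K=P_KP_N=P_N$ for $N\le K$.
\item $P_Ny\to y$, since the $\ell_1$-tail of $y$ is small and $P^{c_0}_N u_m\to u_m$ in each fixed block.
\end{enumerate}
On $W$ I would set $R_N(w_1,w_2,\dots)=(P_Nw_1,\dots,P_Nw_N,0,0,\dots)$. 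These are contractive, finite rank, nested ($R_NR_K=R_KR_N=R_N$ for $N\le K$), and converge strongly to $I_W$. The spaces $(R_N-R_{N-1})W$ then form a monotone FDD.

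\emph{Nonretraction.} Suppose $r\colon B_{W^{**}}\to B_W$ is a uniformly continuous retraction with modulus $\omega$. For each $n$, let $\iota_n\colon Y_n^{**}\to W^{**}$ be the isometric inclusion into the $n$-th coordinate and $\pi_n$ the coordinate projection. Then $r_n=\pi_n\circ r\circ\iota_n\colon B_{Y_n^{**}}\to B_{Y_n}$ is a retraction whose modulus is at most $\omega$, uniformly in $n$.

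Take $S$ from \Cref{prop:quotient-lifting}. For $x\in S_{c_0}$ and $n\ge1$, \Cref{prop:bidual-norm} gives $\norm{Sx}_{n,**}=1$. Choose $y_x\in Y$ with $Qy_x=x$ and $\norm{y_x}_Y<2$; then $y_x\in B_{Y_n}$. Since $Q^{**}(Sx-y_x)=0$, we get $\norm{Sx-y_x}_{n,**}\le 3\cdot2^{-n}$. Because $r_n(y_x)=y_x$,
\begin{equation*}
    \norm{Qr_nSx-x}_\infty\le\norm{r_nSx-r_ny_x}_n\le\omega(3\cdot2^{-n}).
\end{equation*}
Fix $n$ large enough that this is at most some $\lambda<1$. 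The map $x\mapsto r_nSx$ is uniformly continuous from $S_{c_0}$ into $Y_n$, hence into $Y$ after multiplying norms by the constant $2^n$. It is also bounded there, since its values lie in $B_{Y_n}\subseteq 2^nB_Y$. \Cref{lem:correction} then yields a uniformly continuous right inverse of $Q$ on $B_{c_0}$, contradicting \Cref{prop:no-section}.

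The main obstacle is finding contractive projections for the anisotropic norms $\norm{\cdot}_n$. These must be compatible with $Q$, which is why the tail blocks are collapsed into block $N$ rather than discarded. The nonretraction part is essentially the proof of \Cref{thm:amplification} made uniform through the common modulus $\omega$.
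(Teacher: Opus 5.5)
Your proposal is correct and follows essentially the same route as the paper. Your $P_N$ is exactly the paper's $A_N$: for $m<N$ one has $u_m\in E_m\subseteq E_N$, so $P^{c_0}_N u_m=u_m$, and the $N$th block equals $P^{c_0}_N\sum_{m\ge N}u_m$. Your nonretraction argument (coordinate retractions with a common modulus, the isometric lifting $S$, lifts $y_x$ with $\norm{y_x}_Y<2$, then \Cref{lem:correction}) matches the paper's step by step. The only step you state without proof is the strong convergence $R_N w\to w$, and it follows routinely from the contractivity of the $R_N$ together with your item (vi).
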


\begin{proof}
Let
\begin{equation*}
    P_N\colon c_0\longrightarrow E_N
\end{equation*}
be the coordinate truncation. For $N\in\N$, define a finite-rank operator
\begin{equation*}
    A_N\colon Y\longrightarrow Y
\end{equation*}
as follows. If $y=(u_m)_{m=1}^{\infty}\in Y$, put
\begin{equation*}
    (A_Ny)_m=
    \begin{cases}
        u_m,&m<N,\\
        P_N\left(\displaystyle\sum_{k=N}^{\infty}u_k\right),&m=N,\\
        0,&m>N.
    \end{cases}
\end{equation*}
Since $u_m\in E_m\subseteq E_N$ for $m<N$, we have
\begin{equation*}
    QA_Ny=P_NQy.
\end{equation*}
Moreover,
\begin{equation*}
    \norm{A_Ny}_Y \leq \sum_{m=1}^{N-1}\norm{u_m}_\infty + \sum_{m=N}^{\infty}\norm{u_m}_\infty = \norm{y}_Y.
\end{equation*}
Consequently, for every $n\in\N$,
\begin{align*}
    \norm{A_Ny}_n
    &=\max\bigl\{2^{-n}\norm{A_Ny}_Y,\norm{P_NQy}_\infty\bigr\}\\
    &\leq\max\bigl\{2^{-n}\norm{y}_Y,\norm{Qy}_\infty\bigr\}=\norm{y}_n.
\end{align*}
We also record the compatibility of these operators. Suppose, for example,
that $M<N$. Since $A_N$ fixes every vector in the range of $A_M$, we have
$A_NA_M=A_M$. For the reverse composition, the only nontrivial coordinate is
the $M$th one, where
\begin{equation*}
    P_M\left(\sum_{k=M}^{N-1}u_k+P_N\sum_{k=N}^{\infty}u_k\right)=P_M\sum_{k=M}^{\infty}u_k,
\end{equation*}
because $P_MP_N=P_M$. Hence $A_MA_N=A_M$, and therefore
\begin{equation*}
    A_NA_M=A_MA_N=A_{\min\{N,M\}} \qquad(M,N\in\N).
\end{equation*}
Furthermore,
\begin{equation*}
    \norm{A_Ny-y}_Y \leq 2\sum_{m>N}\norm{u_m}_\infty, 
\end{equation*}
and hence
\begin{equation*}
    A_Ny\longrightarrow y \quad \text{ as } \quad N \longrightarrow \infty
\end{equation*}
in every space $Y_n$.

For
\begin{equation*}
    w=(y_n)_{n=1}^{\infty}\in W,
\end{equation*}
define
\begin{equation*}
    \Pi_Nw=(A_Ny_1,\ldots,A_Ny_N,0,0,\ldots).
\end{equation*}
Each $\Pi_N$ is a contractive finite-rank projection, and
\begin{equation*}
    \Pi_N\Pi_M=\Pi_M\Pi_N=\Pi_{\min\{N,M\}} \qquad(M,N\in\N).
\end{equation*}
We claim that, for every fixed $w\in W$,
\begin{equation*}
    \Pi_Nw\longrightarrow w  \quad \text{ as } \quad N \longrightarrow \infty.
\end{equation*}
Indeed, let $w=(y_n)_{n=1}^{\infty}\in W$ and let $\eps>0$. Choose $M\in\N$ such that
\begin{equation*}
    \sup_{n>M}\norm{y_n}_n<\eps.
\end{equation*}
For $N\geq M$, contractivity of $A_N$ gives
\begin{equation*}
    \norm{\Pi_Nw-w}_W \leq \max\left\{ \max_{1\leq n\leq M}\norm{A_Ny_n-y_n}_n, 2\sup_{n>M}\norm{y_n}_n \right\}.
\end{equation*}
The first term tends to zero as $N$ tends to infinity, while the second is at most $2\eps$. This proves the claim.

Set
\begin{equation*}
    \Pi_0=0, \qquad D_N=\Pi_N-\Pi_{N-1}, \qquad F_N=D_NW \quad(N\in\N).
\end{equation*}
The compatibility of the projections gives
\begin{equation*}
    D_ND_M=0\quad(N\neq M), \qquad \sum_{k=1}^{N}D_k=\Pi_N.
\end{equation*}
Then every $F_N$ is finite-dimensional,
\begin{equation*}
    W=\overline{\bigoplus_{N=1}^{\infty}F_N},
\end{equation*}
and the associated partial-sum projections are precisely the operators $\Pi_N$. Thus $(F_N)_{N=1}^{\infty}$ is a monotone finite-dimensional decomposition of $W$.

We now prove the retraction assertion. The argument exploits the fact that a single uniformly continuous map on the bidual unit ball has one modulus of continuity controlling every coordinate. Proceed by contradiction and suppose that
\begin{equation*}
    R\colon B_{W^{**}}\longrightarrow B_W
\end{equation*}
is a uniformly continuous retraction. Choose $\eta>0$ such that
\begin{equation*}
    \norm{Rz-Rz'}_W<\frac{1}{2}
\end{equation*}
whenever $z,z'\in B_{W^{**}}$ satisfy
\begin{equation*}
    \norm{z-z'}_{W^{**}}<\eta,
\end{equation*}
and then choose $n\in\N$ so large that
\begin{equation*}
    3\cdot2^{-n}<\eta.
\end{equation*}

For each $k\in\N$, let
\begin{equation*}
    j_k\colon Y_k^{**}\longrightarrow W^{**}
\end{equation*}
be the $k$th coordinate embedding, and let
\begin{equation*}
    \pi_k\colon W\longrightarrow Y_k
\end{equation*}
be the $k$th coordinate projection. Both maps have norm one. For the index $n$ chosen above, put
\begin{equation*}
    r_n=\pi_nRj_n\colon B_{Y_n^{**}}\longrightarrow B_{Y_n}.
\end{equation*}
Then $r_n$ is a uniformly continuous retraction. Since $j_n$ and $\pi_n$ have norm one, it also satisfies
\begin{equation*}
    \norm{r_nz-r_nz'}_n<\frac{1}{2}
\end{equation*}
whenever $z,z'\in B_{Y_n^{**}}$ satisfy
\begin{equation*}
    \norm{z-z'}_{n,**}<\eta.
\end{equation*}

Let $Q\colon Y\to c_0$ and $S\colon c_0\to Y^{**}$ be the maps from \Cref{prop:quotient-lifting}. By \Cref{prop:bidual-norm},
\begin{equation*}
    \norm{Sx}_{n,**} =\max\bigl\{2^{-n}\norm{Sx},\norm{x}_\infty\bigr\} =\norm{x}_\infty \qquad(x\in c_0).
\end{equation*}
For $x\in S_{c_0}$, choose $y_x\in Y$ such that
\begin{equation*}
    Qy_x=x \qquad\text{and}\qquad \norm{y_x}_Y<2.
\end{equation*}
Then
\begin{equation*}
    \norm{y_x}_n =\max\bigl\{2^{-n}\norm{y_x}_Y,\norm{x}_\infty\bigr\} =1,
\end{equation*}
so $y_x\in B_{Y_n}$. Moreover,
\begin{equation*}
    Q^{**}(Sx-y_x)=0.
\end{equation*}
Hence
\begin{equation*}
    \norm{Sx-y_x}_{n,**} =2^{-n}\norm{Sx-y_x} <3\cdot2^{-n} <\eta.
\end{equation*}
Since $r_n(y_x)=y_x$, it follows that
\begin{equation*}
    \norm{r_n(Sx)-y_x}_n<\frac{1}{2}.
\end{equation*}

Define
\begin{equation*}
    \phi\colon S_{c_0}\longrightarrow Y, \qquad \phi(x)=r_n(Sx).
\end{equation*}
The map $\phi$ is bounded and uniformly continuous. Indeed, $S\colon c_0\to Y_n^{**}$ is an isometry, $r_n$ is uniformly continuous, and the identity map from $Y_n$ to $Y$ has norm at most $2^n$. Moreover,
\begin{equation*}
    \norm{Q\phi(x)-x}_\infty=\norm{Q(r_n(Sx)-y_x)}_\infty\leq\norm{r_n(Sx)-y_x}_n<\frac{1}{2}\qquad(x\in S_{c_0}).
\end{equation*}
By \Cref{lem:correction}, the quotient map $Q$ would have a uniformly continuous right inverse on $B_{c_0}$, contrary to \Cref{prop:no-section}. This contradiction proves that no uniformly continuous retraction from $B_{W^{**}}$ onto $B_W$ exists.
\end{proof}

\begin{remark}\label{rem:mechanism}
The nonretraction part of the proof above is driven by a common modulus of continuity across all coordinates. At the Lipschitz level, \Cref{prop:two-sided} gives the same mechanism in quantitative form. Indeed, if
\begin{equation*}
    T\colon W^{**}\longrightarrow W
\end{equation*}
were an $L$-Lipschitz retraction, then
\begin{equation*}
    \pi_nTj_n\colon Y_n^{**}\longrightarrow Y_n
\end{equation*}
would be an $L$-Lipschitz retraction for every $n$. Consequently,
\begin{equation*}
    L\geq\Lambda(Y_n)\geq2^{n-1} \qquad(n\in\N),
\end{equation*}
which is impossible. Thus the unbounded constants $\Lambda(Y_n)$ provide a direct alternative proof that $W$ is not a Lipschitz retract of $W^{**}$, while \Cref{prop:auxiliary-space} rules out the more general uniformly continuous retractions between the unit balls.
\end{remark}

The following elementary transfer principle will allow us to pass from the auxiliary space $W$ to a space with a Schauder basis.

\begin{lemma}\label{lem:complemented-transfer}
Let $E$ and $Z$ be Banach spaces, and suppose that there are operators
\begin{equation*}
    U\colon E\longrightarrow Z \qquad\text{and}\qquad V\colon Z\longrightarrow E
\end{equation*}
such that
\begin{equation*}
    VU=I_E.
\end{equation*}
If $B_Z$ is a uniformly continuous retract of $B_{Z^{**}}$, then $B_E$ is a uniformly continuous retract of $B_{E^{**}}$. Consequently, failure of this retraction property passes from a complemented subspace to the containing space and is invariant under isomorphism.
\end{lemma}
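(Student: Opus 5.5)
The plan is to build a retraction for $E$ directly from a given uniformly continuous retraction $R\colon B_{Z^{**}}\to B_Z$. We conjugate $R$ by $U^{**}$ and $V$, fix the norm scaling, and then apply a radial retraction so that the output lands in $B_E$.

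\emph{Degenerate case.} If $E=\{0\}$ there is nothing to prove. So assume $E\neq\{0\}$; then $U\neq0$, since $VU=I_E$. Put $a=\norm{U}>0$.

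\emph{Construction.} Let $\rho\colon E\to B_E$ be the radial retraction, with $\rho(e)=e$ for $\norm{e}\leq1$ and $\rho(e)=e/\norm{e}$ otherwise. It is $2$-Lipschitz and fixes $B_E$ pointwise. Define
\begin{equation*}
    r\colon B_{E^{**}}\longrightarrow B_E,\qquad r(z)=\rho\Bigl(a\,V R\bigl(a^{-1}U^{**}z\bigr)\Bigr).
\end{equation*}
This is well defined, because $\norm{U^{**}}=a$ gives $a^{-1}U^{**}z\in B_{Z^{**}}$.

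\emph{Uniform continuity.} The map $r$ is a composition of three pieces:
\begin{itemize}
    \item the bounded linear map $z\mapsto a^{-1}U^{**}z$;
    \item the uniformly continuous map $R$;
    \item the Lipschitz map $w\mapsto\rho(aVw)$, whose constant is at most $2a\norm{V}$.
\end{itemize}
A composition of uniformly continuous maps is uniformly continuous, so $r$ is.

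\emph{Retraction property.} Use the identity $U^{**}J_E=J_ZU$. For $e\in B_E$ we have $a^{-1}Ue\in B_Z$, and $R$ fixes this point. Hence
\begin{equation*}
    a\,VR\bigl(a^{-1}Ue\bigr)=VUe=e,
\end{equation*}
and $\rho(e)=e$. So $r$ restricts to the identity on $B_E$.

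\emph{Where care is needed.} The only delicate point is this norm bookkeeping. Neither $U$ nor $V$ need be contractive, so the naive composite $VRU^{**}$ neither starts inside $B_{Z^{**}}$ nor ends inside $B_E$. The rescaling by $a^{-1}$ fixes the first problem and the radial retraction $\rho$ fixes the second, while both preserve fixed points.

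\emph{Consequences.} For a complemented subspace $E\subseteq Z$ with bounded projection $P$, take $U$ to be the inclusion and $V=P$. For an isomorphism $T\colon E\to Z$, take $U=T$ and $V=T^{-1}$, and also apply the same argument with the roles of $E$ and $Z$ reversed. The contrapositive then gives both stated transfer properties.
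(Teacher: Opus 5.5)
Your proposal is correct and follows the paper's proof: you build the same map $\rho_E\bigl(aVR(a^{-1}U^{**}x^{**})\bigr)$, check the fixed points the same way via $U^{**}J_E=J_ZU$, and derive the consequences by the same contrapositive. The only difference is cosmetic: the paper takes $a=\max\{1,\norm{U}\}$, which avoids your separate treatment of the case $E=\{0\}$.
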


\begin{proof}
Suppose that
\begin{equation*}
    R\colon B_{Z^{**}}\longrightarrow B_Z
\end{equation*}
is a uniformly continuous retraction, and put
\begin{equation*}
    a=\max\{1,\norm{U}\}.
\end{equation*}
Let
\begin{equation*}
    \rho_E\colon E\longrightarrow B_E, \qquad \rho_E(x)=\frac{x}{\max\{1,\norm{x}\}},
\end{equation*}
be the radial retraction. Define
\begin{equation*}
    \widetilde R\colon B_{E^{**}}\longrightarrow B_E
\end{equation*}
by
\begin{equation*}
    \widetilde R(x^{**}) = \rho_E\left( aV R\left(a^{-1}U^{**}x^{**}\right) \right).
\end{equation*}
Since
\begin{equation*}
    a^{-1}U^{**}(B_{E^{**}})\subseteq B_{Z^{**}},
\end{equation*}
the map $\widetilde R$ is well defined and uniformly continuous.

If $x\in B_E$, then $a^{-1}Ux\in B_Z$. Under the canonical embeddings of
$E$ and $Z$ into their biduals, one has $U^{**}x=Ux$, and hence
\begin{equation*}
    \widetilde R(x)=\rho_E\left(aV R\left(a^{-1}Ux\right)\right)=\rho_E(VUx)=\rho_E(x)=x.
\end{equation*}
Thus $\widetilde R$ is a uniformly continuous retraction from $B_{E^{**}}$ onto $B_E$.

If $U\colon E\to Z$ is an isomorphism, we may take $V=U^{-1}$ and apply the preceding implication in both directions. This proves the asserted isomorphic invariance.
\end{proof}

\begin{proof}[Proof of \Cref{thm:main}]
By \Cref{prop:auxiliary-space}, the space $W$ has a monotone finite-dimensional decomposition. In particular, $W$ has the metric approximation property, and hence the bounded approximation property. Therefore, by Pe{\l}czy\'nski's theorem \cite[Theorem~1]{Pelczynski1971}, there is a Banach space $Z$ with a Schauder basis for which there are operators
\begin{equation*}
    U\colon W\longrightarrow Z \qquad\text{and}\qquad V\colon Z\longrightarrow W
\end{equation*}
satisfying
\begin{equation*}
    VU=I_W.
\end{equation*}
Since $B_W$ is not a uniformly continuous retract of $B_{W^{**}}$, \Cref{lem:complemented-transfer} shows that $B_Z$ is not a uniformly continuous retract of $B_{Z^{**}}$. Every Schauder basis admits an equivalent norm under which it is bimonotone. Equip $Z$ with such a norm and denote the resulting Banach space by $X$. Then $X$ is separable and has a bimonotone Schauder basis. Since the identity between $Z$ and $X$ is an isomorphism, the isomorphic invariance in \Cref{lem:complemented-transfer} shows that $B_X$ is not a uniformly continuous retract of $B_{X^{**}}$.

Finally, if $T\colon X^{**}\to X$ were a uniformly continuous retraction, then composing its restriction to $B_{X^{**}}$ with the $2$-Lipschitz radial retraction $\rho_X(x)=x/\max\{1,\norm{x}\}$ would give a uniformly continuous retraction from $B_{X^{**}}$ onto $B_X$, a contradiction. Therefore $X$ is not a uniformly continuous, and hence not a Lipschitz, retract of $X^{**}$.
\end{proof}

Let $\mathbb U_{\mathrm B}$ denote Pe{\l}czy\'nski's complementably universal Banach space for the class of Banach spaces with a Schauder basis \cite{Pelczynski1969}. As an automatic consequence, we get the following.

\begin{corollary}\label{cor:universal-basis}
    The unit ball $B_{\mathbb U_{\mathrm B}}$ is not a uniformly continuous retract of $B_{\mathbb U_{\mathrm B}^{**}}$. Consequently, $\mathbb U_{\mathrm B}$ is not a uniformly continuous, and hence not a Lipschitz, retract of $\mathbb U_{\mathrm B}^{**}$.
\end{corollary}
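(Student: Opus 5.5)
The plan is to reduce \Cref{cor:universal-basis} to \Cref{thm:main} using \Cref{lem:complemented-transfer}, followed by the same radial-retraction argument used at the end of the proof of \Cref{thm:main}.

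\textbf{Step 1 (embedding).} Let $X$ be the space given by \Cref{thm:main}; it has a Schauder basis. By the defining property of Pe{\l}czy\'nski's space $\mathbb U_{\mathrm B}$ \cite{Pelczynski1969}, every Banach space with a Schauder basis is isomorphic to a complemented subspace of $\mathbb U_{\mathrm B}$. We therefore obtain operators
\begin{equation*}
    U\colon X\longrightarrow \mathbb U_{\mathrm B}\qquad\text{and}\qquad V\colon \mathbb U_{\mathrm B}\longrightarrow X
\end{equation*}
satisfying $VU=I_X$. Concretely, $U$ is the isomorphic embedding onto the complemented copy, and $V$ is the bounded projection onto that copy followed by the inverse isomorphism. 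This step needs no extra care beyond being precise about the form of universality used: it is complementable universality up to isomorphism, not isometry. \Cref{lem:complemented-transfer} is insensitive to this distinction.

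\textbf{Step 2 (transfer).} By \Cref{thm:main}, $B_X$ is not a uniformly continuous retract of $B_{X^{**}}$. \Cref{lem:complemented-transfer}, applied with $E=X$ and $Z=\mathbb U_{\mathrm B}$, states that if $B_{\mathbb U_{\mathrm B}}$ were a uniformly continuous retract of $B_{\mathbb U_{\mathrm B}^{**}}$, then so would be $B_X$ of $B_{X^{**}}$. This is a contradiction, and it proves the first assertion.

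\textbf{Step 3 (global retractions).} Suppose $T\colon \mathbb U_{\mathrm B}^{**}\to\mathbb U_{\mathrm B}$ were a uniformly continuous retraction. Restrict it to $B_{\mathbb U_{\mathrm B}^{**}}$ and compose with the $2$-Lipschitz radial retraction $\rho(x)=x/\max\{1,\norm{x}\}$. The composite is uniformly continuous and fixes every point of $B_{\mathbb U_{\mathrm B}}$, since $T$ fixes such points and $\rho$ does too. So it is a uniformly continuous retraction of the balls, contradicting Step 2. Since every Lipschitz map is uniformly continuous, this also excludes Lipschitz retractions.

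No step is a genuine obstacle. The only point to state carefully is the universality property invoked in Step 1.
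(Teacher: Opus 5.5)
Your proposal is correct and matches the paper's proof: complementable universality of $\mathbb U_{\mathrm B}$ gives operators $U,V$ with $VU=I_X$, and \Cref{lem:complemented-transfer} then transfers the unit-ball obstruction from the space $X$ of \Cref{thm:main}. The whole-space assertion follows, as in the paper, by composing a hypothetical retraction with the radial retraction.
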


\begin{proof}
By the complementable universality of $\mathbb U_{\mathrm B}$ \cite[Corollary~1]{Pelczynski1969}, the space $X$ from \Cref{thm:main} is isomorphic to a complemented subspace of $\mathbb U_{\mathrm B}$. The ball assertion follows from \Cref{lem:complemented-transfer}, and the whole-space assertion follows from the radial-retraction argument used in \Cref{thm:main}.
\end{proof}

\begin{remark}\label{rem:no-shrinking-basis}
This method cannot produce a counterexample with a shrinking basis. Indeed, $Y^*=(\bigoplus_{m=1}^{\infty}E_m^*)_{\ell_\infty}$ contains an isometric copy of $\ell_\infty$. Since each $Y_n$ is an equivalent renorming of $Y$, the space $W^*=(\bigoplus_{n=1}^{\infty}Y_n^*)_{\ell_1}$ is nonseparable. As $W$ is isomorphic to a complemented subspace of both $X$ and $\mathbb U_{\mathrm B}$, their duals are also nonseparable. A shrinking Schauder basis forces the dual to be separable, and dual separability is invariant under equivalent renorming, so obtaining a counterexample with a shrinking basis, or more generally with separable dual, would require a different construction.
\end{remark}

We finish with an observation concerning Lipschitz approximability in the sense of Godefroy \cite[Definition~1.1]{Godefroy2020}.

\begin{proposition}\label{prop:soft-lipschitz-approximation}
Let $E$ be a nonzero separable Banach space.
\begin{enumerate}
\renewcommand{\labelenumi}{(\roman{enumi})}
\renewcommand{\theenumi}{(\roman{enumi})}

\item\label{item:soft-lipschitz-extension}
Suppose that, for some $\lambda\geq1$, there are compact operators $T_N\colon E\to E$ such that $\norm{T_N}\leq\lambda$ and $T_Nx\to x$ for every $x\in E$. Then there are maps $\Psi_N\colon E^{**}\to E$ with relatively compact ranges such that $\Lip(\Psi_N)\leq\lambda$ and $\Psi_N(x)\to x$ for every $x\in E$. In particular, $E$ is Lipschitz-approximable with common constant $\lambda$.

\item\label{item:soft-lipschitz-mcap}
If $E$ has the metric compact approximation property, then it is Lipschitz-approximable with optimal common constant $1$. Consequently, the same holds whenever $E$ has the metric approximation property.
\end{enumerate}
\end{proposition}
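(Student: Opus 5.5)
The plan is to use the standard fact that the bidual of a compact operator takes values in the space itself. This gives Lipschitz maps from $E^{**}$ into $E$ with the right constant. I then compose with a $1$-Lipschitz radial shrinking that makes the ranges relatively compact without enlarging the Lipschitz constant.

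For \ref{item:soft-lipschitz-extension}, fix $N$. Since $T_N$ is compact, Gantmacher's theorem gives $T_N^{**}(E^{**})\subseteq J_E(E)$. Thus $T_N^{**}$, viewed as a map $E^{**}\to E$, is linear with norm at most $\lambda$. Moreover $T_N^{**}(rB_{E^{**}})$ is relatively compact for every $r>0$, being contained in the norm closure of $T_N(rB_E)$ by Goldstine's theorem and the weak$^*$-to-norm continuity of $T_N^{**}$ on bounded sets.

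For $\eps>0$ define
\begin{equation*}
    f_\eps\colon E\longrightarrow E,\qquad f_\eps(y)=\frac{y}{1+\eps\norm{y}}.
\end{equation*}
The key step is to check that $f_\eps$ is $1$-Lipschitz. Given $x,y$ with $a=\norm{x}\geq b=\norm{y}$ and $d=\norm{x-y}$, write
\begin{equation*}
    f_\eps(x)-f_\eps(y)=\frac{x-y}{1+\eps a}+y\Bigl(\frac{1}{1+\eps a}-\frac{1}{1+\eps b}\Bigr).
\end{equation*}
Using $|a-b|\leq d$, the norm of this is at most
\begin{equation*}
    d\,\frac{1+2\eps b}{(1+\eps a)(1+\eps b)}\leq d\,\frac{1+2\eps b}{(1+\eps b)^2}\leq d.
\end{equation*}
The ordering $a\geq b$ is the point; in the other order the bound degenerates towards $2$. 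This explains why the naive radial retraction, which is only $2$-Lipschitz, is avoided.

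Then put $\Psi_N=f_{\eps_N}\circ T_N^{**}$ with $\eps_N\to0$. We get $\Lip(\Psi_N)\leq\lambda$. Also $\norm{f_\eps(y)}<1/\eps$, and $f_\eps(y)=t\,y'$ with $t\in[0,1]$ and $y'=y$ running through the range. Hence
\begin{equation*}
    \Psi_N(E^{**})\subseteq[0,1]\cdot T_N^{**}\bigl(\eps_N^{-1}\lambda B_{E^{**}}\bigr),
\end{equation*}
up to closure, which is relatively compact as the image of a compact set under scalar multiplication. Here one should first note that $\norm{T_N^{**}z}$ large forces $f$ to shrink it. To be safe, I will instead observe directly that $f_\eps(y)=s\,y/\norm{y}\cdot$(bounded) and bound via $\norm{f_\eps(y)}\le 1/\eps$. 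The cleanest route is to take $\Psi_N=f_{\eps_N}\circ T_N^{**}$ restricted through the compact set $\overline{[0,1]\cdot T_N(cB_E)}$, and this is the step whose bookkeeping I expect to need the most care. Pointwise convergence follows from
\begin{equation*}
    \norm{\Psi_N(x)-x}\leq\norm{T_Nx-x}+\eps_N\norm{T_Nx}^2\to0,
\end{equation*}
since $\norm{T_Nx}\leq\lambda\norm{x}$. Restricting $\Psi_N$ to $E$ gives Lipschitz-approximability with constant $\lambda$.

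For \ref{item:soft-lipschitz-mcap}, the metric compact approximation property gives compact contractions approximating $I_E$ uniformly on compact sets. Using separability, choose a dense sequence $(x_k)$ and take $T_N$ compact with $\norm{T_N}\leq1$ and $\norm{T_Nx_k-x_k}<1/N$ for $k\leq N$. Equicontinuity then yields $T_Nx\to x$ for all $x$, so \ref{item:soft-lipschitz-extension} applies with $\lambda=1$. Optimality holds because, for $E\neq\{0\}$, pointwise limits of $L$-Lipschitz maps converging to $I_E$ force $L\geq1$. Finally, the metric approximation property implies the metric compact approximation property, since finite-rank operators are compact.
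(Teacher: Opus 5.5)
Your $1$-Lipschitz estimate for $f_\eps$ is correct; it is exactly the paper's computation for $H_N(z)=\frac{N}{N+\norm{z}}z=f_{1/N}(z)$. The Gantmacher step, the pointwise estimate and the reduction of (ii) to (i) are also fine.

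The genuine gap is the relative compactness of the ranges. This is the entire content of Lipschitz-approximability, since without it one could take $\Psi_N=I$. It fails because you shrink \emph{after} applying $T_N^{**}$. The range of $f_\eps\circ T_N^{**}$ is $f_\eps(V)$, where $V=T_N^{**}(E^{**})$ is a linear subspace. Since $f_\eps$ maps each ray $\{tu:t\geq0\}$ onto $\{su:0\leq s<1/\eps\}$, one gets $f_\eps(V)=V\cap\{\norm{y}<1/\eps\}$. This set is relatively compact only if $V$ is finite-dimensional.

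For instance, on $E=\ell_2$ let $T_Ne_k=e_k$ for $k\leq N$ and $T_Ne_k=k^{-1}e_k$ for $k>N$. These are compact contractions with $T_Nx\to x$. Yet $\Psi_N(\eps_N^{-1}k\,e_k)=\frac{1}{2\eps_N}e_k$ for every $k>N$, which is a separated sequence.

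So your inclusion $\Psi_N(E^{**})\subseteq[0,1]\cdot T_N^{**}(\eps_N^{-1}\lambda B_{E^{**}})$ is false. Writing $f_\eps(T_N^{**}z)=T_N^{**}\bigl(z/(1+\eps\norm{T_N^{**}z})\bigr)$, the argument of $T_N^{**}$ is unbounded wherever $T_N^{**}$ nearly annihilates $z$. The bound $\norm{f_\eps(y)}<1/\eps$ controls only the outputs, and bounded subsets of the infinite-dimensional range of a compact operator need not be relatively compact. Your ordering works only when the $T_N$ have finite rank, so the metric compact approximation case of (ii) is not covered.

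The missing idea is to shrink \emph{before} applying the compact operator, which is what the paper does. Put $\widehat T_N=J_E^{-1}\circ T_N^{**}\colon E^{**}\to E$ and $\Psi_N=\widehat T_N\circ f_{\eps_N}$, with $f_{\eps_N}$ now acting on $E^{**}$. Your Lipschitz computation uses only the norm, so it holds verbatim there. Then $\Lip(\Psi_N)\leq\lambda$, and the range lies in $\widehat T_N(\eps_N^{-1}B_{E^{**}})$, which is relatively compact. For $x\in E$ one has $\norm{\Psi_N(x)-x}\leq\lambda\norm{f_{\eps_N}(x)-x}+\norm{T_Nx-x}\leq\lambda\eps_N\norm{x}^2+\norm{T_Nx-x}\to0$.
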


\begin{proof}
We first prove part~\ref{item:soft-lipschitz-extension}. Let $J_E\colon E\to E^{**}$ denote the canonical embedding. Since each $T_N$ is compact, $T_N^{**}$ is compact and has range contained in $J_E(E)$. Define
\begin{equation*}
    \widehat T_N=J_E^{-1}\circ T_N^{**}\colon E^{**}\longrightarrow E.
\end{equation*}
Then $\widehat T_N$ is compact, $\norm{\widehat T_N}=\norm{T_N}$, and $\widehat T_NJ_E=T_N$. Set
\begin{equation*}
    H_N(z)=\frac{N}{N+\norm{z}}\,z,\qquad \Psi_N=\widehat T_N\circ H_N.
\end{equation*}
The map $H_N$ is nonexpansive. Indeed, if $r=\norm{z}\geq s=\norm{w}$, $d=\norm{z-w}$, $\alpha=N/(N+r)$, and $\beta=N/(N+s)$, then $r-s\leq d$ and
\begin{equation*}
    \norm{H_N(z)-H_N(w)}\leq\alpha d+(\beta-\alpha)s\leq\frac{N(N+2s)}{(N+r)(N+s)}\,d\leq d.
\end{equation*}
The last inequality follows from $(N+r)(N+s)-N(N+2s)=N(r-s)+rs\geq0$. Hence $\Lip(\Psi_N)\leq\lambda$, and the compactness of $\widehat T_N$, together with $H_N(E^{**})\subseteq NB_{E^{**}}$, shows that $\Psi_N(E^{**})$ is relatively compact.

For $x\in E$, identified with its canonical image in $E^{**}$, one has $\Psi_N(x)=T_N(Nx/(N+\norm{x}))$ and
\begin{equation*}
    \norm{\Psi_N(x)-x}\leq\lambda\frac{\norm{x}^2}{N+\norm{x}}+\norm{T_Nx-x}\longrightarrow0.
\end{equation*}
Thus the restrictions of $\Psi_N$ to $E$ form the required Lipschitz approximating sequence. This proves part~\ref{item:soft-lipschitz-extension}.

We now prove part~\ref{item:soft-lipschitz-mcap}. Suppose that $E$ has the metric compact approximation property. Choose a dense sequence $(x_j)_{j=1}^{\infty}$ and compact contractions $K_N\colon E\to E$ such that
\begin{equation*}
    \max_{1\leq j\leq N}\norm{K_Nx_j-x_j}<\frac{1}{N}.
\end{equation*}
For $N\geq j$, one has $\norm{K_Nx-x}\leq2\norm{x-x_j}+1/N$, so density gives $K_Nx\to x$ for every $x\in E$. Part~\ref{item:soft-lipschitz-extension} therefore applies with $\lambda=1$. The metric approximation property is the finite-rank special case.

Finally, if $(\varphi_k)_{k \in \N}$ is any Lipschitz approximating sequence with common constant $C$, then for distinct $x,y\in E$ pointwise convergence gives
\begin{equation*}
    \norm{x-y}=\lim_{k\to\infty}\norm{\varphi_k(x)-\varphi_k(y)}\leq C\norm{x-y}.
\end{equation*}
Hence $C\geq1$, proving optimality and completing the proof of part~\ref{item:soft-lipschitz-mcap}.
\end{proof}

\begin{remark}\label{rem:lipschitz-approximable}
Applying \Cref{prop:soft-lipschitz-approximation} to the partial-sum projections shows that the space $X$ in \Cref{thm:main} is Lipschitz-approximable with optimal common Lipschitz constant $1$. The maps $\Psi_N$ do not give retractions: their ranges are relatively compact, and they converge to the identity only on the canonical copy of $X$. Thus the construction does not answer Godefroy's question negatively; rather, it shows that optimal Lipschitz approximability, even together with the metric approximation property and a monotone Schauder basis, does not imply uniformly continuous or Lipschitz retractability from the bidual.
\end{remark}

\bigskip

\noindent\textbf{Acknowledgements.} This paper forms part of the author's PhD research at Lancaster University, conducted under the supervision of Professor N. J. Laustsen. He acknowledges with thanks the funding from the EPSRC (grant number EP/W524438/1) that has supported his studies. \medskip

\noindent\textbf{Statement on AI use.} Large language models, in particular OpenAI's ChatGPT 5.6 Pro, were used during the exploratory and preparatory stages of this work. The author proposed and directed the renorming amplification strategy underlying the construction. ChatGPT assisted in working out parts of the technical implementation, including auxiliary lemmas and technical details, as well as with literature retrieval, consistency checks, and \LaTeX{} preparation. The author takes full responsibility for the mathematical content of the paper. \medskip

For the purpose of open access, the author has applied a Creative Commons Attribution (CC BY) licence to any Author Accepted Manuscript version arising. \medskip

\noindent\textbf{Data availability.} No data was used for the research described in the article.

\end{document}